\documentclass[11pt]{article}
\usepackage{geometry} 
\geometry{letterpaper}  
\usepackage{amsmath,amsthm,amssymb}                
\usepackage{graphicx, color}
\usepackage{amssymb}
\usepackage{epstopdf}
\usepackage{pdfsync}

\newtheorem{definition}{Definition}[section]
\newtheorem{lemma}[definition]{Lemma}
\newtheorem{theorem}[definition]{Theorem}
\newtheorem{proposition}[definition]{Proposition}
\newtheorem{corollary}[definition]{Corollary}
\newtheorem{remark}[definition]{Remark}
\newtheorem{example}[definition]{Example}

\numberwithin{equation}{section}

\def\e{\varepsilon}

\usepackage{lipsum}

\newcommand\blfootnote[1]{%
  \begingroup
  \renewcommand\thefootnote{}\footnote{#1}%
  \addtocounter{footnote}{-1}%
  \endgroup
}

\usepackage{bbm,amsmath,amsthm,epsfig,latexsym,marvosym,mathrsfs}

\title{Continuity of some non-local functionals with respect to \\ a convergence of the underlying measures}
\author{Andrea Braides and Gianni Dal Maso \\ 
SISSA, via Bonomea 265, Trieste, Italy}
\date{}
\begin{document}

\maketitle

\begin{abstract}
We study some non-local functionals on the Sobolev space $W^{1,p}_0(\Omega)$ involving a double integral on $\Omega\times\Omega$ with respect to a measure $\mu$. We introduce a suitable notion of convergence of measures on product spaces which implies a stability property in the sense of $\Gamma$-convergence of the corresponding functionals.
    
{\bf Keywords:} non-local functionals, $\Gamma$-convergence, Mosco convergence, graphons, cut norm
    
{\bf AMS Class:} 49J45, 46E35, 28A33, 28A35
\end{abstract}

\blfootnote{Preprint SISSA 8/2022/MATE}

\section{Introduction}
In this paper we give a contribution to the study of $\Gamma$-limits of non-local integral functional, for which only few results are available in the literature (see for instance \cite{Mosco}).
We consider sequences of  integrals of the type 
\begin{equation}\label{effe-k}
\int_{\Omega\times\Omega} f(u(x),u(y))  d\mu_k(x,y) +\int_\Omega g_k(x,\nabla u(x))\,dx,
\end{equation}
where $\Omega$ is a bounded open subset of $\mathbb R^d$, with $d\ge1$.
These functionals have a non-local term 
$$ 
F_k(u):=\int_{\Omega\times\Omega} f(u(x),u(y))  d\mu_k(x,y)
$$
depending on a fixed function $f\colon \mathbb R\times\mathbb R\to[0,+\infty)$ and varying  positive bounded measures $\mu_k$ on $\Omega\times\Omega$, while the local term 
$$ 
G_k(u):=\int_\Omega g_k(x,\nabla u(x))\,dx
$$
depends on a function $g_k\colon \Omega\times\mathbb R^d\to[0,+\infty)$. These functionals are
defined  for $u$ in the Sobolev space  $W^{1,p}_0(\Omega)$.

 We assume that the functions $g_k$ satisfy usual growth conditions and that the integral functionals 
 $G_k$ $\Gamma$-converge in  the weak topology in $W^{1,p}_0(\Omega)$ to a functional $G$ of the same form, with integrand $g$. 
 
We address the question of the stability for functionals in \eqref{effe-k}; more precisely, we focus on a notion of convergence on $\mu_k$ such that the functionals $F_k+G_k$ $\Gamma$-convergence with respect to the weak topology in $W^{1,p}_0(\Omega)$  to a functional of the form 
\begin{equation}\label{intro:1}
	\int_{\Omega\times\Omega} f(u(x),u(y))  d\mu(x,y) + 
	\int_\Omega g(x,\nabla u(x))\,dx
\end{equation}
for a limit measure $\mu$.
 Under some additional assumptions, we also obtain the convergence of $F_k+G_k$ in the sense of Mosco convergence in $W^{1,p}_0(\Omega)$. 
 
 If $p=2$, $g_k(x,\cdot)$ are quadratic forms, and $f(s,t)=|t-s|^2$, then the study of such functionals can be framed within the theory of Dirichlet Forms \cite{Fukushima}, where the Beurling-Deny formula ensures, under suitable assumptions, that the $\Gamma$-limit of $F_k+G_k$ can be represented analogously (see \cite{Mosco}). 
The extension of that theory is not immediate in a non-quadratic setting or when $f$ is an arbitrary continuous function.

Note that, under suitable growth conditions on $f$, stability is easily proved under the strong assumption of convergence of $\mu_k$ to 
$\mu$ in the space $W^{-1,q}(\Omega\times\Omega)$ dual to $W^{1,p}_0(\Omega\times\Omega)$. However, this result is not satisfactory, since such a space may fail to contain relevant measures $\mu$, depending on the value of $p$, such as Dirac deltas if $p<2d$. 

We introduce a wider space of measures  on $\Omega\times \Omega$, together with a new notion of norm, inspired by a convergence that is used in the theory of {\em graphons} \cite{Bo,Lo}. The latter can be be seen as limits of Dirichlet forms on dense graphs (for an interpretation in terms of $\Gamma$-convergence we refer to \cite{BCD}).

We prove that,  if $\mu_k$ are non-negative measures that converge to $\mu$ with respect to that `graphon' norm and $\mu_k(\Omega\times\Omega)\to \mu(\Omega\times\Omega)$, then the functionals  defined by \eqref{effe-k} $\Gamma$-converge under the only assumption that $f$ be continuous and a very mild technical assumption (see \eqref{trundec}).

\smallskip
We now describe more in detail the content of the paper.
In Section \ref{prelSobolev} we recall some preliminaries on Sobolev functions and introduce the quasicontinuous representative $\widetilde u$ of a function $u\in W^{1,p}_0(\Omega)$, which is needed in the precise definition of the functionals  \eqref{effe-k} and \eqref{intro:1}, when $\mu_k$ or $\mu$ are not absolutely continuous with respect to the Lebesgue measure.

In Section \ref{Sobolev graphons} we introduce the space ${\mathcal M}^{1,p}(\Omega\times\Omega)$ of Radon measures on $\Omega\times\Omega$ with finite `Sobolev cut norm',
defined as
\begin{eqnarray*}
\|\mu\|_\square:=\sup\Bigl\{\Bigl|\int_{\Omega\times\Omega}\varphi(x)\psi(y)d\mu(x,y)\Bigr|: \varphi,\psi\in C^\infty_c(\Omega),
\|\varphi\|_{1,p}\,,\|\psi\|_{1,p}\le 1\Bigr\},
\end{eqnarray*}
where $\|\cdot\|_{1,p}$ is the norm in $W^{1,p}_0(\Omega)$. We prove that, if $\mu\in{\mathcal M}^{1,p}(\Omega\times\Omega)$ and $u, v\in W^{1,p}_0(\Omega)\cap L^\infty(\Omega)$ with compact support in $\Omega$, we have
$$
\Bigl|\int_{\Omega\times\Omega}\widetilde u(x)\widetilde v(y)d\mu(x,y)\Bigr|\le \|\mu\|_\square \|u\|_{1,p}\|v\|_{1,p} ,
$$
and the integral does not depend on the choice of the quasicontinuous representatives (see Theorem \ref{finitemu}).

In Section \ref{continconv} we prove some continuity results for double integrals. We consider
$\mu_k, \mu\in\mathcal M^{1,p}_+(\Omega\times\Omega)$ with  $\mu_k(\Omega{\times}\Omega)<+\infty$,  $\mu(\Omega{\times}\Omega)<+\infty$, and 
$$
\|\mu_k-\mu\|_\square\to 0\quad\hbox{  and }\quad\mu_k(\Omega{\times}\Omega)\to \mu(\Omega{\times}\Omega).
$$
If $u_k, v_k$ are sequences in $W^{1,p}_0(\Omega)\cap L^\infty(\Omega)$ converging to $u,v$  weakly in $W^{1,p}_0(\Omega)$ and with $\sup_k(\|u_k\|_\infty+\|v_k\|_\infty)<+\infty$, then we have
$$
\lim_{k\to+\infty} \int_{\Omega\times\Omega} f(\widetilde u_k(x),\widetilde v_k(y)) d\mu_k(x,y)=
\int_{\Omega\times\Omega}f(\widetilde u(x),\widetilde v(y)) d\mu(x,y) 
$$
for all continuous functions $f\colon\mathbb R^2\to \mathbb R$ (see Corollary \ref{cortre}). This result is obtained by considering first the special case $f(s,t)=st$ (see Corollary \ref{corto}), from which the result can be obtained for a general polynomial (see Proposition \ref{contpol}), and eventually for an arbitrary continuous function by approximation.

Finally, in Section \ref{gammaconv} we consider non-negative continuous functions $f$, and prove the above-mentioned $\Gamma$-convergence results (see Theorem \ref{teogamma}). Note that no convexity assumption on $f$ is needed.



\section{Preliminaries on fine properties of Sobolev functions}\label{prelSobolev}

Throughout the paper we fix $1<p<+\infty$,  $d\ge1$, and a bounded open subset $\Omega$ of $\mathbb R^d$. We consider the Sobolev space $W^{1,p}_0(\Omega)$ endowed with the norm $$\|u\|_{1,p}=\Bigl(\int_\Omega|\nabla u|^p dx\Bigr)^{1/ p}. $$
Its dual is denoted by $W^{-1,q}(\Omega)$, where $\frac1p+\frac1q=1$, and this norm is denoted by $\|\cdot\|_{-1,q}$.

For all subset $A\subset\Omega$ the {\em capacity} of  in $\Omega$ is defined as
$$
C_{1,p}(A):=\inf\Bigl\{\int_\Omega |\nabla u|^p\,dx: u\in W^{1,p}_0(\Omega),\  u\ge 1 \hbox{ a.e.~in a neighbourhood of } A\Bigl\}.
$$
A function $f\colon \Omega\to\mathbb R$ is {\em quasicontinuous} if for every $\varepsilon>0$ there exists an open set $U\subset\Omega$ with $C_{1,p}(U)<\varepsilon$ such that the restriction of to $\Omega\setminus U$ is continuous. It is known (see e.g.~\cite{finlandesi,EG}) that each function $u\in W^{1,p}_0(\Omega)$ has a {\em Borel  quasicontinuous representative}, which we denote by $\widetilde u$, in the sense that $\widetilde u$ is Borel measurable and quasicontinuous and 
$\widetilde u=u$ almost everywhere in $\Omega$. Such a representative is unique up to sets of zero capacity. 

Let ${\mathcal M}(\Omega)$ denote the space of signed Radon measures on $\Omega$, which can be identified with the dual of $C^0_c(\Omega)$. 
We say that $\mu\in {\mathcal M}(\Omega)$ belongs to $W^{-1,q}(\Omega)$ if there existe $C\ge0$ such that
$$
\Bigl|\int_\Omega \varphi\,d\mu\Bigr|\le C\|\varphi\|_{1,p}\qquad\hbox{ for all }\varphi\in C^{\infty}_c(\Omega).
$$ In this case there exists a unique $T_\mu\in W^{-1,q}(\Omega)$ such that
\begin{equation}\label{timu}
\langle T_\mu,\varphi\rangle = \int_\Omega \varphi d\mu
\qquad\hbox{ for all }\varphi\in C^{\infty}_c(\Omega),\end{equation}
 where $\langle \cdot,\cdot\rangle$ denotes the duality product between 
$W^{-1,q}(\Omega)$ and $W^{1,p}_0(\Omega)$. In other words $\mu$ and $T_\mu$ coincide as distributions on $\Omega$. For notational convenience, we shall sometimes directly write $\mu$ in the place of $T_\nu$ when the distinction between the two is not relevant.

In the following theorem we recall a property of sets of zero  $C_{1,p}$-capacity (see \cite{Grun-Rehomme}) and an integral representation of $T_\mu$ that can be deduced from a result of Brezis and Browder \cite{BB}.

In order to simplify the notation, we introduce the space
$$
W^{1,p}_c(\Omega):=\{ u\in W^{1,p}_0(\Omega): u \hbox{ has compact support in } \Omega\}.
$$

\begin{theorem}\label{BreBro}
Let $\mu\in {\mathcal M}(\Omega)\cap W^{-1,q}(\Omega)$. Then for every $A\subset\Omega$ with $C_{1,p}(A)=0$-capacity  $A$ is $|\mu|$-measurable and 
\begin{equation}\label{zerocapBB}
|\mu|(A)=0
\end{equation}
If $u\in W^{1,p}_c(\Omega)\cap L^\infty(\Omega)$, then 
\begin{equation}\label{BBfor}
\langle T_\mu,u\rangle = \int_\Omega \widetilde u d\mu .
\end{equation}
If, in addition, $\mu\ge 0$, then,  for all $u\in W^{1,p}_0(\Omega)$, we have $\widetilde u\in L^1(\Omega;\mu)$ and \eqref{BBfor} holds.
\end{theorem}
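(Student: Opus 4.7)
I would prove (1) by reducing to Borel sets. Any $A$ with $C_{1,p}(A)=0$ is contained in a $G_\delta$ set $B\subset\Omega$ of zero capacity, obtained as $B=\bigcap_n U_n$ for open neighbourhoods $U_n\supset A$ with $C_{1,p}(U_n)<2^{-n}$; once $|\mu|(B)=0$ is established, the $|\mu|$-measurability and nullity of $A$ follow from the completeness of the Radon measure $|\mu|$. To show $|\mu|(B)=0$ I would invoke Grun-Rehomme's theorem to write $\mu=\mu^+-\mu^-$ with $\mu^\pm\in\mathcal M(\Omega)\cap W^{-1,q}(\Omega)$, reducing the claim to the case of a non-negative $\nu\in\mathcal M(\Omega)\cap W^{-1,q}(\Omega)$. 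For such $\nu$, one constructs $\psi_n\in C^\infty_c(\Omega)$ with $0\le\psi_n\le 1$, $\psi_n\ge 1$ on a neighbourhood of $U_n$, and $\|\psi_n\|_{1,p}\to 0$ (by truncating and mollifying a near-minimiser of the capacity problem for $U_n$); then $\nu(B)\le\nu(U_n)\le\int\psi_n\,d\nu=\langle T_\nu,\psi_n\rangle\le\|T_\nu\|_{-1,q}\|\psi_n\|_{1,p}\to 0$.

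For \eqref{BBfor} on $u\in W^{1,p}_c(\Omega)\cap L^\infty(\Omega)$, I would take mollifications $\varphi_n:=u*\rho_n\in C^\infty_c(\Omega)$ (well-defined for $n$ large thanks to the compact support of $u$), which satisfy $\varphi_n\to u$ in $W^{1,p}_0(\Omega)$ and $\|\varphi_n\|_\infty\le\|u\|_\infty$. By \eqref{timu}, $\langle T_\mu,\varphi_n\rangle=\int_\Omega\varphi_n\,d\mu$, and the left-hand side tends to $\langle T_\mu,u\rangle$ by continuity. Along a subsequence $\varphi_n\to\widetilde u$ quasi-everywhere, hence $|\mu|$-almost everywhere by \eqref{zerocapBB}; the uniform $L^\infty$-bound permits dominated convergence, yielding $\int_\Omega\varphi_n\,d\mu\to\int_\Omega\widetilde u\,d\mu$ and hence \eqref{BBfor}.

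To obtain (3) when $\mu\ge 0$, I would first take $u\ge 0$ and set $u_n:=\zeta_n\,T_n(u)$, where $\zeta_n\in C^\infty_c(\Omega)$ are cut-offs with $0\le\zeta_n\le 1$ increasing to $1$ on $\Omega$ and $T_n$ denotes truncation at level $n$. Then $u_n\in W^{1,p}_c(\Omega)\cap L^\infty(\Omega)$, $u_n\to u$ in $W^{1,p}_0(\Omega)$, and $\widetilde u_n\nearrow\widetilde u$ quasi-everywhere, hence $\mu$-a.e.\ by (1). Applying \eqref{BBfor} to each $u_n$, the left-hand side tends to $\langle T_\mu,u\rangle$ by continuity, the right-hand side to $\int_\Omega\widetilde u\,d\mu$ by monotone convergence; the finiteness of the limit yields $\widetilde u\in L^1(\Omega;\mu)$. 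The case of signed $u$ follows by writing $u=u^+-u^-$ and using linearity.

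The main obstacle is the Grun-Rehomme step in (1): the Hahn decomposition in $\mathcal M(\Omega)$ does not a priori preserve membership in $W^{-1,q}(\Omega)$, so a separate variational argument is needed to produce $\mu^+$ and $\mu^-$ within this space. Once (1) is granted, the approximation arguments in (2) and (3) are standard, the only delicate points being the uniform $L^\infty$-bound on $\varphi_n$ in (2)---which crucially exploits the compact support of $u$ to keep the mollifications inside $C^\infty_c(\Omega)$---and the positivity $\mu\ge 0$ in (3), which is indispensable because without an $L^\infty$-bound on $u$ one can only rely on monotone, not dominated, convergence.
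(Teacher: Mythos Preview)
The paper does not give a proof of this theorem: it is stated as a recalled result, with the zero-capacity property attributed to Grun-Rehomme \cite{Grun-Rehomme} and the integral representation to Brezis--Browder \cite{BB}. Your sketch therefore cannot be compared with a proof in the paper, but it is in line with what those references do, and parts (1) and (2) are correct as outlined. Your identification of the Grun-Rehomme decomposition $\mu=\mu^+-\mu^-$ with $\mu^\pm\in W^{-1,q}(\Omega)$ as the non-trivial input is accurate.

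One step in part (3) needs more care. You assert that $u_n=\zeta_n\,T_n(u)\to u$ in $W^{1,p}_0(\Omega)$, which you need in order to pass to the limit in $\langle T_\mu,u_n\rangle$. But $\nabla u_n$ contains the term $T_n(u)\,\nabla\zeta_n$, and for a generic increasing sequence of cut-offs on a general bounded open set $\Omega$ there is no reason this term stays bounded in $L^p$, let alone tends to zero; a Hardy-type inequality would suffice, but that is an extra hypothesis on $\Omega$. The difficulty is real but not fatal: one option is to choose the $\zeta_n$ adapted to $u$ via a diagonal argument over $C^\infty_c$-approximants of $T_n(u)$; another is to drop monotonicity, use Fatou with any non-negative $C^\infty_c$-approximation of $u$ to obtain $\int_\Omega\widetilde u\,d\mu\le\langle T_\mu,u\rangle$, and then recover the reverse inequality by the Brezis--Browder device (applying the same one-sided bound to a suitable auxiliary function). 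Either route closes the gap.
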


\section{Sobolev graphons}\label{Sobolev graphons}
{\em Graphons} are functions $\rho$ defined on $(0,1)\times (0,1)$ introduced to study functionals of the form 
$$
\int_{(0,1)\times (0,1)} f(u(x),u(y))\rho(x,y)\,dx\,dy,
$$
defined for  $u\in L^\infty(0,1)$, especially when $f(u,v)=|u-v|^2$. 
The functionals above are introduced as a generalization, and in some sense a limit, of energies on dense graphs \cite{Bo,Lo}. 
To that end, the space of graphons is equipped with the so-called {\em cut norm }
\begin{eqnarray}\label{normagraphon0}
\|\rho\|_\square :=\sup_{\varphi,\psi\colon(0,1)\to[0,1]}\ \biggl|\int_{(0,1)\times (0,1)}\varphi(x)\psi(y)\rho(x,y)\,dx\,dy\biggr|.
\end{eqnarray}

We extend the definition of cut norm to arbitrary measures defined on $\Omega\times \Omega$, with $\Omega$ in the place of $(0,1)$. For our purpose it is convenient to take test functions in the Sobolev space $W^{1,p}_0(\Omega)$.

\begin{definition}
The space  ${\mathcal M}^{1,p}(\Omega\times\Omega)$ is defined as
\begin{eqnarray}
{\mathcal M}^{1,p}(\Omega\times\Omega):=\bigl\{\mu\in {\mathcal M}(\Omega\times\Omega):\|\mu\|_\square<+\infty\bigr\},
\end{eqnarray}
where the {\em Sobolev cut norm} of $\mu$ is defined as 
\begin{eqnarray}\label{normagraphon}
\|\mu\|_\square:=\sup\Bigl\{\Bigl|\int_{\Omega\times\Omega}\varphi(x)\psi(y)d\mu(x,y)\Bigr|: \varphi,\psi\in C^\infty_c(\Omega),
\|\varphi\|_{1,p}\,,\|\psi\|_{1,p}\le 1\Bigr\}.
\end{eqnarray}
We let ${\mathcal M}^{1,p}_+(\Omega\times\Omega)$ denote the cone of the positive measures in ${\mathcal M}^{1,p}(\Omega\times\Omega)$.
\end{definition}

\begin{remark}\label{rem3}\rm
We make some observations on the convergence in the space ${\mathcal M}^{1,p}(\Omega\times\Omega)$, and in particular we compare it with weak$^*$ convergence in ${\mathcal M}(\Omega\times\Omega)$ and with the convergence  $W^{-1,q}({\Omega{\times}\Omega})$.

(i) $\|\cdot\|_\square$ defines a norm on ${\mathcal M}^{1,p}(\Omega\times\Omega)$;

(ii) if $\mu$ belongs to $W^{-1,q}(\Omega\times\Omega)$, then $\|\mu\|_\square\le \|\mu\|_{-1,q}$. Indeed in such a case the function $\Phi(x,y)= \varphi(x)\psi(y)$ belongs to $W^{1,p}_0(\Omega\times\Omega)$ 
and 
\begin{eqnarray*}
\int_{\Omega\times \Omega}|\nabla \Phi|^p dx\,dy&=& \int_{\Omega\times \Omega}|\psi(y)\nabla \varphi(x)+ \varphi(x)\nabla\psi(y)|^p dx\,dy\\
&\le& C\int_{\Omega\times \Omega}|\psi(y)|^p|\nabla \varphi(x)|^p+ |\varphi(x)|^p|\nabla\psi(y)|^p dx\,dy
\le C\end{eqnarray*}
so that 
$$
\Bigl|\int_{\Omega\times\Omega}\varphi(x)\psi(y)d\mu(x,y)\Bigr|\le C\|\mu\|_{-1,q} ;
$$

(iii) if $\mu_k, \mu \in {\mathcal M}^{1,p}({\Omega\times\Omega})$ and $\|\mu_k-\mu\|_\square\to0$, then 
\begin{equation}\label{convint}
\int_{\Omega\times\Omega}\varphi(x)\psi(y)d\mu_k(x,y)\to \int_{\Omega\times\Omega}\varphi(x)\psi(y)d\mu(x,y)
\end{equation}
for all $\varphi,\psi\in C^\infty_c(\Omega)$. If in addition  $\sup_k|\mu_k|(\Omega{\times}\Omega)<+\infty$, then \eqref{convint} and a density argument imply that $\mu_k$ converges to $\mu$ weakly$^*$ in ${\mathcal M}(\Omega\times\Omega)$;

(iv) if $\mu^1_k$ and $\mu^2_k$ are such that $\mu^j_k$ converge to $\mu^j$ in $W^{-1,q}(\Omega)$, then 
the measures $\mu_k= \mu^1_k\otimes \mu^2_k $ converge to $\mu= \mu^1\otimes \mu^2 $;

(v) if $\sup_k \|\mu_k\|_\square <+\infty$ and $\mu_k$  converges to some $\mu$ weakly$^*$ in ${\mathcal M}({\Omega\times\Omega})$, then $\mu\in {\mathcal M}^{1,p}({\Omega\times\Omega})$ and $\|\mu\|_\square\le\liminf_k \|\mu_k\|_\square$;

%
\end{remark}

Note that ${\mathcal M}^{1,p}({\Omega{\times}\Omega})$ is strictly larger than ${\mathcal M}({\Omega{\times}\Omega})\cap W^{-1,q}({\Omega{\times}\Omega})$, and in particular its convergence is weaker than convergence in $W^{-1,q}({\Omega{\times}\Omega})$. Some examples of this inclusion are given below.

\begin{example} \rm (i) The first example is simply a Dirac delta $\mu= \delta(x_0, y_0)$ for $x_0,y_0\in\Omega$.
Indeed if $d<p<2d$, then $\mu$ does not belong to 
$W^{-1,q}(\Omega\times\Omega)$, while
$$
\Bigl|\int_{\Omega\times\Omega}\varphi(x)\psi(y)d\mu(x,y)\Bigr|= \Bigl|\varphi(x_0)\psi(y_0)\Bigr|\le C\|\varphi\|_{1,p}\|\psi\|_{1,p}
$$
where the inequality follows from the embedding of $W^{1,p}_0(\Omega)$ into $L^\infty(\Omega)$.

\smallskip
(ii) We can also exhibit an example where $\mu$ is absolutely continuous with respect to ${\cal L}^d$ with density of the form $m(x)m(y)$. We choose $d=1$, $p=2$, $\Omega=(-1,1)$, and 
$$
m(x)=\frac{1}{|x|\,\log|x|\,\log|\log x|\, \log^2|\log|\log x||}.
$$
Note that $m\in L^1(0,1)$ so that $\mu\in{\mathcal M}^{1,2}(\Omega\times\Omega)$. If we take
$$
w(x,y)= \begin{cases}\Bigl|\log\bigl|\log\sqrt{x^2+y^2}\bigr|\Bigr|  & \hbox{ if } \sqrt{x^2+y^2}<{1}/{e},\\
0 & \hbox{ otherwise,}\end{cases}
$$
then $w\in W^{1,2}_0((-1,1)^2)$ but $\int_{(-1,1)^2} w\,d\mu=+\infty$. By Theorem \ref{BreBro}, this shows that $\mu\not\in W^{-1,2}((-1,1)^2)$. 
\end{example}

\begin{theorem}\label{finitemu}
Let $\mu\in {\mathcal M}^{1,p}(\Omega\times\Omega)$. Then for every  $A\subset\Omega$ with $C_{1,p}(A)=0$ the sets $A\times\Omega$ and $\Omega\times A$ are $|\mu|$-measurable and 
\begin{equation}\label{muzero}
|\mu|(A\times\Omega)=|\mu|(\Omega\times A)=0 .
\end{equation}
Moreover, for all $u,v\in W^{1,p}_c(\Omega)\cap L^\infty(\Omega)$ we have 
\begin{equation}\label{intcapuv}
\Bigl|\int_{\Omega\times\Omega}\widetilde u(x)\widetilde v(y)d\mu(x,y)\Bigr|\le \|\mu\|_\square \|u\|_{1,p}\|v\|_{1,p}\,.
\end{equation}
\end{theorem}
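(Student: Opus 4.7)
The two assertions are naturally linked: part (1) is what guarantees that $\widetilde u(x)\widetilde v(y)$ in part (2) is unambiguously defined $|\mu|$-almost everywhere on $\Omega\times\Omega$, since two quasicontinuous representatives differ on a set of zero capacity. I would therefore establish part (1) first and then deduce part (2) by an approximation argument. The main tool is Theorem \ref{BreBro}, which I plan to import from the one-variable setting to the product setting via a slice construction.

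For part (1), fix $\psi\in C^\infty_c(\Omega)$ and introduce the signed Radon measure $\nu_\psi$ on $\Omega$ defined by
\begin{equation*}
\nu_\psi(E):=\int_{\Omega\times\Omega}\mathbf 1_E(x)\,\psi(y)\,d\mu(x,y),\qquad E\subset\Omega\ \text{Borel}.
\end{equation*}
The defining estimate \eqref{normagraphon} of the cut norm immediately yields $\bigl|\int_\Omega\varphi\,d\nu_\psi\bigr|\le\|\mu\|_\square\|\psi\|_{1,p}\|\varphi\|_{1,p}$ for every $\varphi\in C^\infty_c(\Omega)$, so $\nu_\psi\in W^{-1,q}(\Omega)$. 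Theorem \ref{BreBro} then gives $|\nu_\psi|(A)=0$ whenever $C_{1,p}(A)=0$. When $\mu\ge 0$, the conclusion follows at once: choose $\psi_n\in C^\infty_c(\Omega)$ with $0\le\psi_n\le 1$ and $\psi_n\equiv 1$ on an exhausting sequence of compacts $K_n\nearrow\Omega$, observe that $\mu(A\times K_n)\le\nu_{\psi_n}(A)=0$, and let $n\to\infty$. The assertion for $\Omega\times A$ is symmetric, and the extension from Borel $A$ to an arbitrary subset with $C_{1,p}(A)=0$ is automatic via a Borel cover of zero capacity. The delicate point, which I expect to be the main obstacle, is the signed case: $|\nu_\psi|(A)=0$ only controls the \emph{net} $y$-average of $\mu$ on $A$ and not the full variation $|\mu|(A\times\Omega)$. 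My plan is to verify that the Jordan parts $\mu_\pm$ belong to ${\mathcal M}^{1,p}_+(\Omega\times\Omega)$ (equivalently $|\mu|\in{\mathcal M}^{1,p}_+$), which would reduce the signed case to the nonnegative one; failing this, one can argue directly by covering $A$ with open sets $U_n\supset A$ satisfying $C_{1,p}(U_n)\to 0$, choosing $\varphi_n\in C^\infty_c(\Omega)$ with $\varphi_n\ge 1$ on $U_n$ and $\|\varphi_n\|_{1,p}\to 0$, and combining the tensor bound $|\int\varphi_n(x)\psi_K(y)\,d\mu|\le\|\mu\|_\square\|\varphi_n\|_{1,p}\|\psi_K\|_{1,p}$ with a Hahn decomposition of $\mu$ on the compact slab $U_n\times K$ to upgrade to $|\mu|(A\times K)=0$.

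Once part (1) is established, part (2) follows by a standard approximation. Mollify $u,v\in W^{1,p}_c(\Omega)\cap L^\infty(\Omega)$ to obtain sequences $u_n,v_n\in C^\infty_c(\Omega)$ with supports in a fixed compact $K\subset\Omega$, $u_n\to u$ and $v_n\to v$ strongly in $W^{1,p}_0(\Omega)$, and $\|u_n\|_\infty\le\|u\|_\infty$, $\|v_n\|_\infty\le\|v\|_\infty$. Along a subsequence $u_n\to\widetilde u$ and $v_n\to\widetilde v$ quasi-everywhere on $\Omega$, and by part (1) this upgrades to $u_n(x)v_n(y)\to\widetilde u(x)\widetilde v(y)$ for $|\mu|$-almost every $(x,y)\in\Omega\times\Omega$. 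Since the integrands are uniformly bounded by $\|u\|_\infty\|v\|_\infty$ and $|\mu|(K\times K)<+\infty$ (Radon), dominated convergence yields $\int u_n(x)v_n(y)\,d\mu\to\int\widetilde u(x)\widetilde v(y)\,d\mu$. Passing to the limit in the defining inequality $\bigl|\int u_n(x)v_n(y)\,d\mu\bigr|\le\|\mu\|_\square\|u_n\|_{1,p}\|v_n\|_{1,p}$ then gives \eqref{intcapuv}.
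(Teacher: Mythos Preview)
Your Part~(2) argument is correct and takes a route different from the paper's. You approximate $u,v$ by mollified sequences and pass to the limit via dominated convergence, using Part~(1) to convert q.e.\ convergence into $|\mu|$-a.e.\ convergence on $\Omega\times\Omega$. The paper instead applies the Brezis--Browder formula \eqref{BBfor} twice: first to the slice measure $\mu_\psi\in W^{-1,q}(\Omega)$ in the $x$-variable, obtaining $\bigl|\int\widetilde u(x)\psi(y)\,d\mu\bigr|\le\|\mu\|_\square\|u\|_{1,p}\|\psi\|_{1,p}$ for smooth $\psi$; then, fixing $u$, to the new slice $\mu^u(E):=\int_{\Omega\times E}\widetilde u(x)\,d\mu$ in the $y$-variable. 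Your approach is more elementary; the paper's has the side benefit that the auxiliary measures $\mu_\psi$ and $\mu^u$ are reused later (see \eqref{supor}--\eqref{kii}).

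The genuine gap is in Part~(1), the signed case. Your primary plan---verifying that $\mu_\pm\in{\mathcal M}^{1,p}_+(\Omega\times\Omega)$---is not routine and there is no reason to expect it to hold: the cut norm tests only tensor integrands $\varphi(x)\psi(y)$, while the Hahn set $P\subset\Omega\times\Omega$ is in general not a product, so $\|\mu\|_\square$ exerts no control over $\int_P\varphi(x)\psi(y)\,d\mu=\int\varphi\psi\,d\mu_+$. Your fallback has the same defect: the bound $\bigl|\int\varphi_n\psi_K\,d\mu\bigr|\le\|\mu\|_\square\|\varphi_n\|_{1,p}\|\psi_K\|_{1,p}\to 0$ again controls only the net integral, and ``combining with a Hahn decomposition'' would require estimating $\int_P\varphi_n\psi_K\,d\mu$ separately, which the cut norm does not do. The paper's device sidesteps this entirely. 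Since every Borel $B'\subset B$ also has zero capacity, Theorem~\ref{BreBro} gives $\nu_\psi(B')=0$ for \emph{all} such $B'$ and \emph{all} $\psi\in C^\infty_c(\Omega)$; fixing $B'$ and varying $\psi$ shows that the signed measure $E\mapsto\mu(B'\times E)$ vanishes identically, so $\mu$ is zero on every measurable rectangle in $B\times\Omega$, hence on every Borel subset, and therefore $|\mu|(B\times\Omega)=0$.
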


\begin{proof} Fix $\psi\in C^\infty_c(\Omega)$, and let $\mu_\psi\in {\mathcal M}(\Omega)$ be defined by
\begin{equation}\label{mupsi}
\mu_\psi(B)=\int_{B{\times}\Omega}\psi(y) d\mu(x,y)\hbox{ for all Borel sets $B\subset\subset\Omega$.} 
\end{equation}
 Since 
$$
\int_{\Omega} \varphi(x)d\mu_\psi(x)= \int_{\Omega\times\Omega}\varphi(x)\psi(y)d\mu(x,y) \hbox{ for all $\varphi\in C^\infty_c(\Omega)$,}
$$ 
by the definition of $\|\mu\|_\square$ we have
\begin{equation}\label{tred}
\Bigl|\int_{\Omega} \varphi(x)d\mu_\psi(x)\Bigr|\le
\|\mu\|_\square \|\psi\|_{1,p}\|\varphi\|_{1,p} \hbox{ for all $\varphi\in C^\infty_c(\Omega)$.}
\end{equation}
Hence,
$\mu_\psi\in \mathcal M(\Omega)\cap W^{-1,q}(\Omega)$.

Let $B\subset \Omega$ be a Borel set with $C_{1,p}(B)=0$. Let $\Omega'$ be an open set with $\Omega'\subset\subset\Omega$, and let $\psi_k$ be a non-decreasing sequence of non-negative functions in $C^\infty_c(\Omega')$ converging to the constant $1$ in $\Omega$. 
By the definition of $\mu_{\psi_k}$ and Theorem \ref{BreBro}, we have 
$$
\int_{B\times \Omega} \psi_k(y)d\mu(x,y)=\mu_{\psi_k}(B)=0.
$$
Letting $k\to+\infty$ we deduce $\mu(B\times \Omega')=0$. 
Since this holds for all Borel subsets of $B$ we deduce that $|\mu|(B\times \Omega')=0$. By the arbitrariness of $\Omega'\subset\subset\Omega$ we finally obtain $|\mu|(B\times \Omega)=0$.
For a general $A\subset\Omega$ with $C_{1,p}(A)=0$ it is sufficient to observe that there exists a Borel set $B$ such that $A\subset B\subset \Omega$ and $C_{1,p}(B)=0$. This implies that $A\times\Omega$ and is $|\mu|$-measurable and $|\mu|(A\times\Omega)=0$. A similar argument proves the same result for $\Omega\times A$.

Let $T_{\mu_\psi}\in W^{-1,q}(\Omega)$ be defined as in \eqref{timu} with $\mu$ replaced by $\mu_\psi$.
Fix $u\in W^{1,p}_c(\Omega)\cap L^\infty(\Omega)$. Then, by Theorem \ref{BreBro} and the definition of $\mu_\psi$, we have
$$
\langle T_{\mu_\psi}, u\rangle = \int_\Omega \widetilde u(x) d\mu_\psi(x)=\int_{\Omega\times\Omega} \widetilde u(x)\psi(y) d\mu(x,y).
$$
By \eqref{tred} we have 
$
\|T_{\mu_\psi}\|_{-1,q}\le \|\mu\|_\square \|\psi\|_{1,p}$,
so that the previous equality gives 
\begin{equation}\label{poi}
\Bigl|\int_{\Omega\times\Omega} \widetilde u(x)\psi(y) d\mu(x,y)\Bigr|\le \|\mu\|_\square \|\psi\|_{1,p}\|u\|_{1,p}
\end{equation}
for all $u\in W^{1,p}_c(\Omega)\cap L^\infty(\Omega)$ and every $\psi\in C^\infty_c(\Omega)$.

Fix $u\in W^{1,p}_c(\Omega)\cap L^\infty(\Omega)$ and let $\mu^u\in \mathcal M(\Omega)$ be defined by
\begin{equation}\label{muu}
\mu^u(B):=\int_{\Omega\times B} \widetilde u(x)d\mu(x,y)
\hbox{ for all Borel sets $B\subset\subset\Omega$.} 
\end{equation}
Since
$$
\int_\Omega\psi(y)d \mu^u(y)= \int_{\Omega\times \Omega} \widetilde u(x)\psi(y)d\mu(x,y)\hbox{ for all $\psi\in C^\infty_c(\Omega)$,}
$$
thanks to \eqref{poi} we then have
$$
\Bigl|\int_\Omega\psi(y)d \mu^u(y)\Bigr|\le \|\mu\|_\square \|\psi\|_{1,p}\|u\|_{1,p}\hbox{ for all $\psi\in C^\infty_c(\Omega)$.}
$$
Hence,
$\mu^u\in \mathcal M(\Omega)\cap W^{-1,q}(\Omega)$, and, using the notation above, we obtain
$$
\|T_{\mu^u}\|_{-1,q}\le \|\mu\|_\square \|u\|_{1,p}\,.
$$
By Theorem \ref{BreBro} and the definition of $\mu^u$, we then have
$$
\langle T_{\mu^u}, v\rangle = \int_\Omega \widetilde v(y) d\mu^u(y)=\int_{\Omega\times\Omega} \widetilde u(x)\widetilde v(y) d\mu(x,y).
$$
Together with the previous inequality this gives 
$$
\Bigl|\int_{\Omega\times\Omega} \widetilde u(x)\widetilde v(y) d\mu(x,y)\Bigr|\le
\|\mu\|_\square \|u\|_{1,p}\|v\|_{1,p}\,,
$$
which concludes the proof of \eqref{intcapuv}.\end{proof}

%
%

\section{Continuity properties of some double integrals}\label{continconv}
In this section we find conditions on $f, u_k, v_k, u, v, \mu_k$, and $\mu$ which imply the convergence
$$
\lim_{k\to+\infty} \int_{\Omega\times\Omega} f(\widetilde u_k(x),\widetilde v_k(y))d\mu_k(x,y)=
\int_{\Omega\times\Omega}f(\widetilde u(x),\widetilde v(y))d\mu(x,y).
$$
We begin with the case $f(s,t)=st$. Subsequently, we consider the case when $f$ is a polynomial, and finally an arbitrary continuous function by approximation.

\begin{lemma}\label{lemmaco}
Let $\mu\in\mathcal M^{1,p}_+(\Omega\times\Omega)$ and let $u_k, v_k$ be sequences in $W^{1,p}_c(\Omega)\cap L^\infty(\Omega)$ converging to $u,v$  weakly in $W^{1,p}(\Omega)$.
Assume that there exist a compact set $K\subset\Omega$ and a constant $M$ such that 
\begin{equation}
{\rm supp}(u_k)\cup{\rm supp}(v_k)\subset K\hbox{ and } \|u_k\|_\infty + \|v_k\|_\infty\le M,
\end{equation}
where $\|\cdot\|_\infty$ denotes the norm in $L^\infty(\Omega)$
Then 
\begin{equation}
\lim_{k\to+\infty} \int_{\Omega\times\Omega}\widetilde u_k(x)\widetilde v_k(y)d\mu(x,y)=
\int_{\Omega\times\Omega}\widetilde u(x)\widetilde v(y)d\mu(x,y).
\end{equation}
\end{lemma}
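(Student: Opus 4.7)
The plan is to decompose
\begin{align*}
\int_{\Omega\times\Omega} \widetilde u_k(x)\widetilde v_k(y)\,d\mu - \int_{\Omega\times\Omega}\widetilde u(x)\widetilde v(y)\,d\mu
&= \int (\widetilde u_k-\widetilde u)\,\widetilde v\,d\mu + \int \widetilde u\,(\widetilde v_k-\widetilde v)\,d\mu\\
&\quad + \int (\widetilde u_k-\widetilde u)(\widetilde v_k-\widetilde v)\,d\mu.
\end{align*}
The first two terms should vanish via the dual constructions used in the proof of Theorem~\ref{finitemu}. For $v\in W^{1,p}_c(\Omega)\cap L^\infty(\Omega)$, the measure $\nu_v\in\mathcal M(\Omega)$ defined by $\nu_v(B):=\int_{B\times\Omega}\widetilde v(y)\,d\mu$ satisfies (by the same argument as for $\mu_\psi$ and $\mu^u$ in that proof) $\nu_v\in W^{-1,q}(\Omega)$ with $\|T_{\nu_v}\|_{-1,q}\le\|\mu\|_\square\|v\|_{1,p}$; Theorem~\ref{BreBro} then gives
\[
\int(\widetilde u_k-\widetilde u)\,\widetilde v\,d\mu=\langle T_{\nu_v},u_k-u\rangle\to 0
\]
by $u_k\weak u$ in $W^{1,p}_0(\Omega)$. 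The symmetric measure $\nu^u(B):=\int_{\Omega\times B}\widetilde u(x)\,d\mu$ handles the second term analogously.

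The third, bilinear remainder is the crux. Theorem~\ref{finitemu} only bounds it by $\|\mu\|_\square\|u_k-u\|_{1,p}\|v_k-v\|_{1,p}$, which is uniformly bounded but not infinitesimal. To make it vanish I would pass to a subsequence along which $\widetilde u_k\to\widetilde u$ and $\widetilde v_k\to\widetilde v$ \emph{quasi-uniformly}, i.e.\ for every $\varepsilon>0$ there is an open set $A_\varepsilon\subset\Omega$ with $C_{1,p}(A_\varepsilon)<\varepsilon$ such that both convergences are uniform on $\Omega\setminus A_\varepsilon$. This extraction exploits the uniform $L^\infty$-bound, the Rellich compactness $W^{1,p}_0\hookrightarrow L^r$ for every $r<+\infty$, and a capacity-Lusin/Egorov statement for bounded Sobolev sequences. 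The cut norm then controls the $\mu$-mass of such exceptional sets: choosing a capacitary potential $w_\varepsilon\in W^{1,p}_c(\Omega)\cap L^\infty(\Omega)$ with $\widetilde w_\varepsilon\ge 1$ on $A_\varepsilon$ and $\|w_\varepsilon\|_{1,p}^p\le 2\varepsilon$, and $\eta\in C^\infty_c(\Omega)$ with $\eta\equiv 1$ on $K$, Theorem~\ref{finitemu} gives
\[
\mu(A_\varepsilon\times K)\le \int \widetilde w_\varepsilon(x)\,\eta(y)\,d\mu \le \|\mu\|_\square\,\|w_\varepsilon\|_{1,p}\|\eta\|_{1,p}\le C\,\varepsilon^{1/p},
\]
and symmetrically $\mu(K\times A_\varepsilon)\le C\,\varepsilon^{1/p}$.

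Splitting the double integral on $K\times K$ into $(K\setminus A_\varepsilon)\times(K\setminus A_\varepsilon)$ and its complement, the first region yields a vanishing contribution thanks to uniform convergence combined with $\mu(K\times K)<+\infty$, while the complement has $\mu$-mass at most $2C\varepsilon^{1/p}$ and hence contributes at most a constant multiple of $M^2\varepsilon^{1/p}$ by the $L^\infty$-bound. Sending $\varepsilon\to 0$ gives the limit along the subsequence, and since the argument applies to \emph{every} subsequence extracted from the given one, the full sequence converges. The main obstacle is precisely the capacity-Lusin step: upgrading weak $W^{1,p}$-convergence (plus the uniform $L^\infty$-bound) to quasi-uniform convergence of the quasicontinuous representatives is more delicate than standard Rellich compactness and will require invoking a fine-property result for bounded Sobolev sequences.
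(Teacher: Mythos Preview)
Your handling of the two linear terms via the dual measures $\nu_v,\nu^u\in W^{-1,q}(\Omega)$ is correct. The difficulty you locate in the bilinear remainder is real, but the route you propose---extracting a subsequence along which $\widetilde u_k\to\widetilde u$ quasi-uniformly---is not justified. Quasi-everywhere (hence quasi-uniform) convergence of the quasicontinuous representatives along a subsequence follows from \emph{strong} $W^{1,p}$-convergence; for merely weak convergence Rellich gives only a.e.\ convergence, and the gap between a.e.\ and q.e.\ is precisely what your capacity estimate needs. For a heuristic obstruction, take $p=d=2$ and spread $k^2$ disjoint logarithmic bumps of height $1$ and individual energy of order $k^{-2}$ over a lattice of mesh $1/k$: then $u_k\rightharpoonup 0$ with $\|u_k\|_\infty=1$, yet the superlevel sets $\{\widetilde u_k\ge\tfrac12\}$ have capacity bounded away from~$0$, so no subsequence converges quasi-uniformly. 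There is no off-the-shelf ``capacity-Lusin'' statement that fills this gap, as your own caveat (``main obstacle'') already signals.

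The paper bypasses the issue with a two-term telescoping
\[
\int\widetilde u_k\widetilde v_k\,d\mu-\int\widetilde u\,\widetilde v\,d\mu
=\int(\widetilde u_k-\widetilde u)\,\widetilde v_k\,d\mu+\int\widetilde u\,(\widetilde v_k-\widetilde v)\,d\mu
\]
and two ingredients you did not use: the positivity of $\mu$, and the fact that $|u_k-u|\rightharpoonup 0$ weakly in $W^{1,p}_0(\Omega)$ (it is bounded in $W^{1,p}_0$ and tends to $0$ in $L^p$ by Rellich). Choosing $\psi\in C^\infty_c(\Omega)$ with $0\le\psi\le 1$ and $\psi=1$ on $K$, one has $|\widetilde v_k(y)|\le M\psi(y)$, hence
\[
\Bigl|\int(\widetilde u_k-\widetilde u)\,\widetilde v_k\,d\mu\Bigr|
\le M\int_{\Omega\times\Omega}|\widetilde u_k-\widetilde u|(x)\,\psi(y)\,d\mu
= M\,\langle T_{\mu_\psi},\,|u_k-u|\rangle\longrightarrow 0,
\]
with $\mu_\psi(B)=\int_{B\times\Omega}\psi\,d\mu$ the non-negative marginal in $W^{-1,q}(\Omega)$; the second term is exactly your $\langle T_{\nu^u},v_k-v\rangle$. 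Note that the same one-line estimate, with $\widetilde v_k-\widetilde v$ in place of $\widetilde v_k$ (so $|\widetilde v_k-\widetilde v|\le 2M\psi$), would also dispose of your bilinear remainder directly---no capacity or quasi-uniform argument is needed at all.
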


\begin{proof}
We write
\begin{eqnarray}\label{oppo}\nonumber
&&\int_{\Omega\times\Omega}\widetilde u_k(x)\widetilde v_k(y)d\mu(x,y)-
\int_{\Omega\times\Omega}\widetilde u(x)\widetilde v(y)d\mu(x,y)\\
&=&\int_{\Omega\times\Omega}(\widetilde u_k(x)- \widetilde u(x))\widetilde v_k(y)d\mu(x,y)+
\int_{\Omega\times\Omega}\widetilde u(x)(\widetilde v_k(y)-\widetilde v(y))d\mu(x,y).
\end{eqnarray}
The first integral in \eqref{oppo} is estimated by
\begin{eqnarray}\label{supor}\nonumber
\Bigl|\int_{\Omega\times\Omega}(\widetilde u_k(x)- \widetilde u(x))\widetilde v_k(y)d\mu(x,y)
\Bigr|&\le& M\int_{\Omega\times\Omega}|\widetilde u_k(x)- \widetilde u(x)|\psi(y)d\mu(x,y)\\
&=&M \langle T_{\mu_\psi}, |u_k-u| \rangle=o(1)
\end{eqnarray}
as $k\to+\infty$, where $\psi$ is any function in $C^\infty_c(\Omega)$ with $0\le\psi\le 1$ in $\Omega$ and $\psi=1$ on $K$, and $\mu_\psi$ is defined in \eqref{mupsi}, and the convergence to $0$ follows from the fact that $|u_k-u|\rightharpoonup 0$ weakly in $W^{1,p}_0(\Omega)$.
Moreover, if $\mu^u$ is defined in \eqref{muu}, we have 
\begin{equation}\label{kii}
\int_{\Omega\times\Omega}\widetilde u(x)(\widetilde v_k(y)-\widetilde v(y))d\mu(x,y)= \langle T_{\mu^u}, v_k-v\rangle=o(1)
\end{equation}
as $k\to+\infty$. The convergence to $0$ follows from the fact that $v_k-v\rightharpoonup 0$ weakly in $W^{1,p}_0(\Omega)$.
\end{proof}

\begin{corollary}\label{corto}
Under the same hypotheses of Lemma {\rm\ref{lemmaco}}, let
$\mu_k\in\mathcal M^{1,p}(\Omega\times\Omega)$ with $\|\mu_k-\mu\|_\square\to 0$. Then 
\begin{equation}
\lim_{k\to+\infty} \int_{\Omega\times\Omega}\widetilde u_k(x)\widetilde v_k(y)d\mu_k(x,y)=
\int_{\Omega\times\Omega}\widetilde u(x)\widetilde v(y)d\mu(x,y).
\end{equation}
\end{corollary}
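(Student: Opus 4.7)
The plan is to reduce to Lemma \ref{lemmaco} by inserting and subtracting a crossover term, writing
\begin{equation*}
\int_{\Omega\times\Omega}\widetilde u_k\,\widetilde v_k\,d\mu_k - \int_{\Omega\times\Omega}\widetilde u\,\widetilde v\,d\mu
= \int_{\Omega\times\Omega}\widetilde u_k\,\widetilde v_k\,d(\mu_k-\mu)
+ \Bigl(\int_{\Omega\times\Omega}\widetilde u_k\,\widetilde v_k\,d\mu - \int_{\Omega\times\Omega}\widetilde u\,\widetilde v\,d\mu\Bigr).
\end{equation*}
The second parenthesis is exactly the quantity controlled by Lemma \ref{lemmaco} applied to the fixed positive measure $\mu$ and the sequences $u_k,v_k$, hence it tends to $0$.

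For the first term, I would apply the Sobolev cut-norm estimate of Theorem \ref{finitemu} to the signed measure $\mu_k-\mu\in\mathcal M^{1,p}(\Omega\times\Omega)$. Both $u_k$ and $v_k$ lie in $W^{1,p}_c(\Omega)\cap L^\infty(\Omega)$, with supports contained in the common compact set $K\subset\Omega$ and with $\|u_k\|_\infty+\|v_k\|_\infty\le M$, so the hypotheses of Theorem \ref{finitemu} are satisfied and we obtain
\begin{equation*}
\Bigl|\int_{\Omega\times\Omega}\widetilde u_k(x)\,\widetilde v_k(y)\,d(\mu_k-\mu)(x,y)\Bigr|
\le \|\mu_k-\mu\|_\square\,\|u_k\|_{1,p}\,\|v_k\|_{1,p}.
\end{equation*}
Since $u_k,v_k$ converge weakly in $W^{1,p}_0(\Omega)$, the norms $\|u_k\|_{1,p}$ and $\|v_k\|_{1,p}$ are uniformly bounded, and by hypothesis $\|\mu_k-\mu\|_\square\to 0$; therefore the right-hand side tends to $0$, which completes the proof.

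I do not foresee a real obstacle here: the only subtlety is that Theorem \ref{finitemu} must be invoked for the signed measure $\mu_k-\mu$ rather than for a positive measure, but its statement and proof are given for an arbitrary $\mu\in\mathcal M^{1,p}(\Omega\times\Omega)$ with no positivity assumption, so this causes no difficulty. The crossover trick is the standard device for separating the variation of the measures from the variation of the test functions, and it works cleanly because the cut-norm bound from Theorem \ref{finitemu} is exactly the bilinear $W^{1,p}_0\times W^{1,p}_0$ bound we need.
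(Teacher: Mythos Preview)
Your proof is correct and follows exactly the same route as the paper: split the difference into the crossover term $\int \widetilde u_k\widetilde v_k\,d(\mu_k-\mu)$, bounded via Theorem~\ref{finitemu} by $\|\mu_k-\mu\|_\square\|u_k\|_{1,p}\|v_k\|_{1,p}\to 0$, and the remainder handled by Lemma~\ref{lemmaco}. Your observation that Theorem~\ref{finitemu} applies to signed measures is also the point the paper uses implicitly.
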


\begin{proof}
It suffices to remark that
$$
\Bigl|\int_{\Omega\times\Omega}\widetilde u_k(x)\widetilde v_k(y)d\mu_k(x,y)- \int_{\Omega\times\Omega}\widetilde u_k(x)\widetilde v_k(y)d\mu(x,y)\Bigr|\le \|\mu_k-\mu\|_\square\|u_k\|_{1,p}\|v_k\|_{1,p}
$$
and that the right-hand side tends to $0$. The conclusion then follows from Lemma {\rm\ref{lemmaco}}.
\end{proof}

\begin{proposition}
Let $P\colon \mathbb R^2\to \mathbb R$ be a polynomial function and let $\varphi,\psi\in C^\infty_c(\Omega)$. Let
$\mu_k, \mu\in\mathcal M^{1,p}_+(\Omega\times\Omega)$ with $\|\mu_k-\mu\|_\square\to 0$, let $u_k, v_k$ be sequences in $W^{1,p}_0(\Omega)\cap L^\infty(\Omega)$ converging to $u,v$  weakly in $W^{1,p}_0(\Omega)$ and equibounded in $L^\infty(\Omega)$. Then
\begin{equation}\label{poiu}
\lim_{k\to+\infty} \int_{\Omega\times\Omega}\varphi(x)\psi(y) P(\widetilde u_k(x),\widetilde v_k(y))d\mu_k(x,y)=
\int_{\Omega\times\Omega}\varphi(x)\psi(y)P(\widetilde u(x),\widetilde v(y))d\mu(x,y).
\end{equation}
\end{proposition}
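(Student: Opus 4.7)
The natural plan is to reduce to monomials by linearity and then apply Corollary \ref{corto}. Writing $P(s,t)=\sum_{i,j} c_{ij}s^i t^j$, it suffices to prove, for each pair of non-negative integers $(i,j)$, that
$$
\int_{\Omega\times\Omega}\varphi(x)\psi(y)\,\widetilde u_k(x)^i\widetilde v_k(y)^j\,d\mu_k(x,y)\longrightarrow\int_{\Omega\times\Omega}\varphi(x)\psi(y)\,\widetilde u(x)^i\widetilde v(y)^j\,d\mu(x,y).
$$
To do this, I set $U_k:=\varphi\, u_k^i$ and $V_k:=\psi\, v_k^j$, with analogous limits $U:=\varphi\, u^i$ and $V:=\psi\, v^j$. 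Because $\varphi,\psi\in C^\infty_c(\Omega)$, all these functions belong to $W^{1,p}_c(\Omega)\cap L^\infty(\Omega)$ with supports contained in the fixed compact set $K:={\rm supp}(\varphi)\cup{\rm supp}(\psi)\subset\subset\Omega$, and their $L^\infty$-norms are controlled uniformly in $k$ by the equiboundedness of $u_k,v_k$. Moreover, since $\widetilde u_k^i$ and $\widetilde v_k^j$ are quasicontinuous Borel representatives of $u_k^i$ and $v_k^j$, the products $\varphi\widetilde u_k^i$ and $\psi\widetilde v_k^j$ are quasicontinuous Borel representatives of $U_k$ and $V_k$.

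The core technical step is to verify that $U_k\weak U$ and $V_k\weak V$ weakly in $W^{1,p}_0(\Omega)$. Uniform boundedness in $W^{1,p}_0(\Omega)$ follows from the product rule, the $L^\infty$-bound, and the Leibniz identity $\nabla(\varphi u_k^i)=u_k^i\nabla\varphi+i\varphi u_k^{i-1}\nabla u_k$. To identify the weak limit, I extract a subsequence along which, by the Rellich--Kondrachov theorem, $u_k\to u$ strongly in $L^p(\Omega)$ and a.e.\ in $\Omega$; combined with the $L^\infty$-bound and dominated convergence, this yields $u_k^i\to u^i$ strongly in $L^p(\Omega)$ and $u_k^{i-1}\to u^{i-1}$ in every $L^r$ with $r<\infty$. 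The standard fact that an a.e.\ convergent, $L^\infty$-bounded sequence multiplied against a weakly $L^p$-convergent sequence converges weakly in $L^p$ then gives $u_k^{i-1}\nabla u_k\weak u^{i-1}\nabla u$ in $L^p(\Omega;\mathbb R^d)$, and therefore $\nabla U_k\weak\nabla U$ in $L^p(\Omega;\mathbb R^d)$. The limit being independent of the subsequence, the whole sequence $U_k$ converges weakly to $U$; the same argument handles $V_k$.

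Having established these properties, I apply Corollary \ref{corto} with the pair $(U_k,V_k)$ in place of $(u_k,v_k)$, which gives
$$
\lim_{k\to+\infty}\int_{\Omega\times\Omega}\widetilde U_k(x)\widetilde V_k(y)\,d\mu_k(x,y)=\int_{\Omega\times\Omega}\widetilde U(x)\widetilde V(y)\,d\mu(x,y),
$$
that is, exactly the desired convergence for the monomial $s^i t^j$. Summing the contributions of the finitely many monomials making up $P$ with their coefficients $c_{ij}$ yields \eqref{poiu}.

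The main obstacle is therefore not the application of Corollary \ref{corto} itself, which is essentially mechanical once the hypotheses are in place, but rather the weak $W^{1,p}_0$-convergence of the products $\varphi u_k^i$. This is a mild issue because the $L^\infty$-equiboundedness hypothesis trivialises the nonlinearity $s\mapsto s^i$, but it is the place where the assumption $\sup_k(\|u_k\|_\infty+\|v_k\|_\infty)<+\infty$ is genuinely used, and it is what makes the polynomial case unproblematic and sets the stage for the later extension to arbitrary continuous $f$ by uniform approximation on compact sets.
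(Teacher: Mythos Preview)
Your proof is correct and follows essentially the same route as the paper: reduce to monomials $s^m t^n$, set $U_k=\varphi u_k^m$, $V_k=\psi v_k^n$, and apply Corollary~\ref{corto}. The only difference is that you spell out in detail the verification that $U_k\weak U$ and $V_k\weak V$ in $W^{1,p}_0(\Omega)$, which the paper simply asserts.
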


\begin{proof}
It is sufficient to consider $P(s,t)= s^m t^n$ for some non-negative integers $m,n$. We define $w_k(x)= \varphi(x) u_k(x)^m$ and $z_k(y)= \psi(y) v_k(y)^n$. Observe that $w_k, z_k$ satisfy the hypotheses of Lemma \ref{lemmaco}, weakly converging in $W^{1,p}_0(\Omega)$ to $w,z$ given by
$w(x)= \varphi(x) u(x)^m$ and $z(y)= \psi(y) v(y)^n$.
Since
$$
\varphi(x)\psi(y) P(\widetilde u_k(x),\widetilde v_k(y))= 
\widetilde w_k(x)\,\widetilde z_k(y),\qquad
\varphi(x)\psi(y) P(\widetilde u(x),\widetilde v(y))= 
\widetilde w(x)\,\widetilde z(y),
$$
the claim then follows by Corollary \ref{corto}. 
\end{proof}

In the next proposition we consider a stronger condition on the convergence of $\mu_k$ to $\mu$ which allows to avoid the multiplication by the cut-off functions in the previous proposition. Note that the second condition in \eqref{cococo} is necessary for the validity of \eqref{ploi} when $P$ is a constant.

\begin{proposition}\label{contpol}
Let $P\colon \mathbb R^2\to \mathbb R$ be a polynomial function. Let
$\mu_k, \mu\in\mathcal M^{1,p}_+(\Omega\times\Omega)$ with  $\mu_k(\Omega{\times}\Omega)<+\infty$ and  $\mu(\Omega{\times}\Omega)<+\infty$. Suppose that \begin{equation}\label{cococo}\|\mu_k-\mu\|_\square\to 0\quad\hbox{  and }\quad\mu_k(\Omega{\times}\Omega)\to \mu(\Omega{\times}\Omega).\end{equation} Let $u_k, v_k$ be sequences in $W^{1,p}_0(\Omega)\cap L^\infty(\Omega)$ converging to $u,v$  weakly in $W^{1,p}_0(\Omega)$ and with $\sup_k(\|u_k\|_\infty+\|v_k\|_\infty)=M<+\infty$. Then
\begin{equation}\label{ploi}
\lim_{k\to+\infty} \int_{\Omega\times\Omega} P(\widetilde u_k(x),\widetilde v_k(y))d\mu_k(x,y)=
\int_{\Omega\times\Omega}P(\widetilde u(x),\widetilde v(y))d\mu(x,y).
\end{equation}
\end{proposition}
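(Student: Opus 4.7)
The plan is to reduce to the preceding proposition (which handles the analogous statement with cutoff factors $\varphi(x)\psi(y)$) by inserting such cutoffs and using the mass condition $\mu_k(\Omega{\times}\Omega)\to \mu(\Omega{\times}\Omega)$ to show that the complementary region carries arbitrarily little mass uniformly in $k$.

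Given $\eta>0$, I would exploit the inner regularity of the finite Radon measure $\mu$ to pick a compact set $K'\subset \Omega{\times}\Omega$ with $\mu((\Omega{\times}\Omega)\setminus K')<\eta$. Its projections onto the two factors are compact subsets of $\Omega$, so I can choose $\varphi,\psi\in C^\infty_c(\Omega)$ with $0\le \varphi,\psi\le 1$ identically equal to $1$ on these projections. Then $\varphi(x)\psi(y)=1$ on $K'$, hence
$$\int_{\Omega\times\Omega}\bigl(1-\varphi(x)\psi(y)\bigr)\,d\mu(x,y)\le \mu\bigl((\Omega{\times}\Omega)\setminus K'\bigr)<\eta.$$

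Setting $C:=\sup\{|P(s,t)|:|s|,|t|\le M\}$, I would split
$$\int_{\Omega\times\Omega} P(\widetilde u_k,\widetilde v_k)\,d\mu_k = \int_{\Omega\times\Omega}\varphi(x)\psi(y)\,P(\widetilde u_k,\widetilde v_k)\,d\mu_k + \int_{\Omega\times\Omega}\bigl(1-\varphi(x)\psi(y)\bigr)P(\widetilde u_k,\widetilde v_k)\,d\mu_k,$$
and analogously for the integral against $\mu$. The first summand on each side converges to its natural limit by the preceding proposition, while the two tail terms are dominated in absolute value by $C\int_{\Omega\times\Omega}(1-\varphi\psi)\,d\mu_k$ and $C\int_{\Omega\times\Omega}(1-\varphi\psi)\,d\mu\le C\eta$, respectively.

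It then suffices to show $\int_{\Omega\times\Omega}(1-\varphi\psi)\,d\mu_k\to \int_{\Omega\times\Omega}(1-\varphi\psi)\,d\mu$. Rewriting the left-hand side as $\mu_k(\Omega{\times}\Omega)-\int_{\Omega\times\Omega}\varphi(x)\psi(y)\,d\mu_k$, the first term converges to $\mu(\Omega{\times}\Omega)$ by hypothesis, and the second converges to $\int_{\Omega\times\Omega}\varphi(x)\psi(y)\,d\mu$ via the cut-norm estimate
$$\Bigl|\int_{\Omega\times\Omega}\varphi(x)\psi(y)\,(d\mu_k-d\mu)\Bigr|\le\|\mu_k-\mu\|_\square\,\|\varphi\|_{1,p}\|\psi\|_{1,p}\to 0.$$
Putting everything together yields $\limsup_k\bigl|\int_{\Omega\times\Omega} P(\widetilde u_k,\widetilde v_k)\,d\mu_k-\int_{\Omega\times\Omega} P(\widetilde u,\widetilde v)\,d\mu\bigr|\le 3C\eta$, and letting $\eta\to 0$ concludes the proof. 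The main (albeit mild) obstacle is precisely this last step: the mass-convergence condition in \eqref{cococo} is indispensable, since without it mass of the $\mu_k$ could escape toward the boundary of $\Omega{\times}\Omega$ and no uniform bound on the tails would be possible.
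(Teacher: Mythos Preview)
Your argument is correct and follows the same overall scheme as the paper: insert cutoffs, invoke the preceding proposition for the main term, and control the tail via the mass condition in \eqref{cococo}.

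The one noteworthy difference is in how the tail is handled. The paper first observes that \eqref{cococo} implies weak$^*$ convergence of $\mu_k$ to $\mu$, then uses a Portmanteau-type argument to obtain a compact $K_\varepsilon\subset\Omega$ with $\mu_k\bigl((\Omega{\times}\Omega)\setminus(K_\varepsilon{\times}K_\varepsilon)\bigr)<\varepsilon$ uniformly in $k$. You instead show directly that $\int(1-\varphi\psi)\,d\mu_k\to\int(1-\varphi\psi)\,d\mu<\eta$ by writing this as $\mu_k(\Omega{\times}\Omega)-\int\varphi\psi\,d\mu_k$ and using the two hypotheses in \eqref{cococo} separately. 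Your route is a bit more elementary, since it bypasses the weak$^*$ convergence step and the choice of a set with $\mu$-negligible boundary; the paper's route, on the other hand, yields a compact set that will be reused later (in the proof of Theorem~\ref{cont-teo}). Both arrive at the same $\limsup$ bound and conclude by letting the parameter tend to~$0$.
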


\begin{proof} 
By Remark \ref{rem3}(iii) the first condition in \eqref{cococo} implies that $\mu_k\rightharpoonup\mu$ weakly$^*$ in $\mathcal M(\Omega{\times}\Omega)$. This, together with the second condition in \eqref{cococo}, gives
$\mu_k(B)\to\mu(B)$ for all Borel sets in $\Omega\times\Omega$ such that $\mu(\partial B\cap (\Omega\times\Omega))=0$. Then, for every $\varepsilon>0$ there exists a compact set $K_\varepsilon$ of $\Omega$ such that
\begin{equation}\label{kappaepsilon}
\mu((\Omega\times\Omega)\setminus (K_\varepsilon\times K_\varepsilon))<\varepsilon \ \hbox{ and }\  \mu_k((\Omega\times\Omega)\setminus (K_\varepsilon\times K_\varepsilon))<\varepsilon \hbox{  for every }k.
\end{equation}
Let $\varphi_\e\in C^\infty_c(\Omega)$ with $0\le \varphi_\e\le 1$ in $\Omega$ and $\varphi_\e= 1$ on $K_\varepsilon$, and let
$$
C_M=\max\{ P(s,t) : s,t\in [-M,M]\}.
$$
With this choice 
$$
\Bigl|\int_{\Omega\times\Omega} P(\widetilde u_k(x),\widetilde v_k(y))d\mu_k(x,y)-
\int_{\Omega\times\Omega}\varphi_\e(x) \varphi_\e(y) P(\widetilde u_k(x),\widetilde v_k(y))d\mu_k(x,y)\Bigr|\le C_M \e
$$
and
$$
\Bigl|\int_{\Omega\times\Omega} P(\widetilde u(x),\widetilde v(y))d\mu(x,y)-
\int_{\Omega\times\Omega}\varphi_\e(x) \varphi_\e(y) P(\widetilde u(x),\widetilde v(y))d\mu(x,y)\Bigr|\le C_M \e.
$$
By \eqref{poiu} with $\varphi=\psi=\varphi_\e$ we then deduce 
$$
\limsup_{k\to+\infty}\Bigl|\int_{\Omega\times\Omega} P(\widetilde u_k(x),\widetilde v_k(y))d\mu_k(x,y)-
\int_{\Omega\times\Omega}P(\widetilde u(x),\widetilde v(y))d\mu(x,y)\Bigr|\le 2C_M\e,
$$
from which we obtain \eqref{ploi} by the arbitrariness of $\e$.
\end{proof}

We are now ready to prove the result for an arbitrary continuous function $f$.

\begin{corollary}\label{cortre}
Let $f\colon \mathbb R^2\to \mathbb R$ be a continuous function.
Under the assumptions of Proposition {\rm\ref{contpol}} we have
\begin{equation}\label{ploi2}
\lim_{k\to+\infty} \int_{\Omega\times\Omega} f(\widetilde u_k(x),\widetilde v_k(y))d\mu_k(x,y)=
\int_{\Omega\times\Omega}f(\widetilde u(x),\widetilde v(y))d\mu(x,y).
\end{equation}
\end{corollary}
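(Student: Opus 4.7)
The plan is to reduce the continuous case to the polynomial case of Proposition \ref{contpol} via uniform approximation by polynomials on a compact box, using that the total variations of the relevant measures are uniformly bounded.

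First I would observe that, thanks to the uniform $L^\infty$-bound $\|u_k\|_\infty+\|v_k\|_\infty\le M$, the quasicontinuous representatives $\widetilde u_k$ and $\widetilde v_k$ take values in $[-M,M]$ outside a set of zero $C_{1,p}$-capacity: indeed, the truncation $T_M(s):=(-M)\vee s\wedge M$ preserves quasicontinuity, $T_M(\widetilde u_k)=\widetilde u_k$ a.e.\ in $\Omega$, and two quasicontinuous functions that agree a.e.\ agree quasi-everywhere. The same applies to $\widetilde u,\widetilde v$. By Theorem \ref{finitemu}, all the measures $\mu_k$ and $\mu$ give zero mass to sets of the form $A\times\Omega$ or $\Omega\times A$ with $C_{1,p}(A)=0$, so redefining $\widetilde u_k,\widetilde v_k,\widetilde u,\widetilde v$ on such sets does not affect any of the integrals in \eqref{ploi2}. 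Hence we may assume that all these quasicontinuous representatives take values in $[-M,M]$ everywhere.

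Next, fix $\varepsilon>0$ and, by the Weierstrass approximation theorem, choose a polynomial $P\colon\mathbb{R}^2\to\mathbb{R}$ such that $|f(s,t)-P(s,t)|\le\varepsilon$ for every $(s,t)\in[-M,M]^2$. Split
$$
\int_{\Omega\times\Omega}\!\! f(\widetilde u_k,\widetilde v_k)d\mu_k - \int_{\Omega\times\Omega}\!\! f(\widetilde u,\widetilde v)d\mu = I_k^1 + I_k^2 + I^3,
$$
where
$$
I_k^1=\int_{\Omega\times\Omega}(f-P)(\widetilde u_k,\widetilde v_k)d\mu_k,\qquad I^3=\int_{\Omega\times\Omega}(P-f)(\widetilde u,\widetilde v)d\mu,
$$
and $I_k^2=\int P(\widetilde u_k,\widetilde v_k)d\mu_k-\int P(\widetilde u,\widetilde v)d\mu$. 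Since $\widetilde u_k,\widetilde v_k$ and $\widetilde u,\widetilde v$ take values in $[-M,M]$, we have $|I_k^1|\le\varepsilon\,\mu_k(\Omega\times\Omega)$ and $|I^3|\le\varepsilon\,\mu(\Omega\times\Omega)$. By the assumption $\mu_k(\Omega\times\Omega)\to\mu(\Omega\times\Omega)$, the sequence $\mu_k(\Omega\times\Omega)$ is bounded, so $\limsup_k(|I_k^1|+|I^3|)\le 2\varepsilon\,\mu(\Omega\times\Omega)+o(1)$. Meanwhile, Proposition \ref{contpol} applied to the polynomial $P$ gives $I_k^2\to 0$. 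Combining these estimates,
$$
\limsup_{k\to+\infty}\Bigl|\int_{\Omega\times\Omega}f(\widetilde u_k,\widetilde v_k)d\mu_k-\int_{\Omega\times\Omega}f(\widetilde u,\widetilde v)d\mu\Bigr|\le 2\varepsilon\,\mu(\Omega\times\Omega),
$$
and the arbitrariness of $\varepsilon$ yields \eqref{ploi2}.

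The only subtle point, and the one I would treat carefully, is justifying the a.e.-to-quasi-everywhere passage that allows the pointwise bound $|\widetilde u_k|,|\widetilde v_k|\le M$ (needed to invoke Weierstrass on a fixed compact box); everything else is a direct triangle-inequality reduction to Proposition \ref{contpol}.
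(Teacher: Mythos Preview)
Your proof is correct and follows exactly the approach of the paper, which simply states that it suffices to approximate $f$ uniformly by polynomials on $[-M,M]^2$ and apply Proposition~\ref{contpol}. You have carefully filled in the details the paper omits, in particular the justification that the quasicontinuous representatives may be taken with values in $[-M,M]$ $\mu_k$- and $\mu$-almost everywhere.
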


\begin{proof}
It suffices to approximate uniformly $f$ by polynomials on $[-M,M]^2$ and apply the previous proposition.
\end{proof}

Finally, if $f$ is bounded we can remove the hypothesis that $u_k$ and $v_k$ are bounded in $L^\infty$. 

\begin{theorem}\label{cont-teo}
Let $f\colon \mathbb R^2\to \mathbb R$ be a bounded continuous function. Let
$\mu_k, \mu\in\mathcal M^{1,p}_+(\Omega\times\Omega)$ with  $\mu_k(\Omega{\times}\Omega)<+\infty$ and  $\mu(\Omega{\times}\Omega)<+\infty$. Suppose that \begin{equation}\label{cococo-2}\|\mu_k-\mu\|_\square\to 0\quad\hbox{  and }\quad\mu_k(\Omega{\times}\Omega)\to \mu(\Omega{\times}\Omega).\end{equation} Let $u_k, v_k$ be sequences in $W^{1,p}_0(\Omega)$ converging to $u,v$  weakly in $W^{1,p}_0(\Omega)$, then 
\begin{equation}\label{ploi3}
\lim_{k\to+\infty}\int_{\Omega\times\Omega} f(\widetilde u_k(x),\widetilde v_k(y))d\mu_k(x,y)=
\int_{\Omega\times\Omega}f(\widetilde u(x),\widetilde v(y))d\mu(x,y).
\end{equation}
\end{theorem}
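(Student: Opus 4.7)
The natural strategy is a truncation argument reducing to Corollary~\ref{cortre}. For $M>0$ let $T_M(s) := (-M) \vee (s \wedge M)$ and set $u_k^M := T_M(u_k)$, $v_k^M := T_M(v_k)$, and similarly $u^M, v^M$. Because $T_M$ is $1$-Lipschitz with $T_M(0)=0$, each $u_k^M$ lies in $W^{1,p}_0(\Omega) \cap L^\infty(\Omega)$ with $\|u_k^M\|_\infty \le M$ and $\|u_k^M\|_{1,p} \le \|u_k\|_{1,p}$; Rellich compactness gives $u_k \to u$ strongly in $L^p$, whence $u_k^M \to u^M$ in $L^p$, and together with the uniform gradient bound this yields $u_k^M \weak u^M$ weakly in $W^{1,p}_0(\Omega)$. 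Likewise for $v_k^M$. Hence Corollary~\ref{cortre} applies at each fixed $M$:
$$
\lim_{k\to\infty}\int_{\Omega\times\Omega} f(\widetilde u_k^M(x),\widetilde v_k^M(y))\,d\mu_k(x,y) = \int_{\Omega\times\Omega} f(\widetilde u^M(x),\widetilde v^M(y))\,d\mu(x,y).
$$

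The real task is to show that the truncation error vanishes as $M \to \infty$, uniformly in $k$. Since $f$ is bounded and $f(\widetilde u_k,\widetilde v_k)=f(\widetilde u_k^M,\widetilde v_k^M)$ wherever $|\widetilde u_k(x)|\le M$ and $|\widetilde v_k(y)|\le M$, setting $E_k^M := \{|\widetilde u_k|>M\}$ and $F_k^M := \{|\widetilde v_k|>M\}$ gives
$$
\Bigl|\int f(\widetilde u_k,\widetilde v_k)\,d\mu_k - \int f(\widetilde u_k^M,\widetilde v_k^M)\,d\mu_k\Bigr| \le 2\|f\|_\infty\bigl[\mu_k(E_k^M\times\Omega) + \mu_k(\Omega\times F_k^M)\bigr],
$$
and similarly for the limit with $\mu$. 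So everything reduces to the uniform tail estimate $\sup_k \mu_k(E_k^M\times\Omega) \to 0$ and its analogue for $v_k$ and for $\mu$; this is the main obstacle.

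To establish it, fix $\varepsilon>0$. As in the proof of Proposition~\ref{contpol}, the weak$^*$ convergence $\mu_k \weakstar \mu$ (which follows from Remark~\ref{rem3}(iii) and \eqref{cococo-2}) together with $\mu_k(\Omega\times\Omega)\to\mu(\Omega\times\Omega)$ yields a compact $K_\varepsilon \subset \Omega$ with $\mu_k((\Omega\times\Omega)\setminus(K_\varepsilon\times K_\varepsilon))<\varepsilon$ for all $k$ and likewise for $\mu$. Pick $\varphi_\varepsilon \in C^\infty_c(\Omega)$ with $0\le\varphi_\varepsilon\le 1$ and $\varphi_\varepsilon\equiv 1$ on $K_\varepsilon$, and a piecewise affine $\eta_M\colon\mathbb R\to[0,1]$ equal to $0$ on $[-M/2,M/2]$, equal to $1$ on $\{|s|\ge M\}$, with $|\eta_M'|\le 2/M$, so that $\mathbf 1_{\{|s|>M\}}\le\eta_M(s)$. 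Then $\varphi_\varepsilon\eta_M(u_k) \in W^{1,p}_c(\Omega)\cap L^\infty(\Omega)$ and its quasicontinuous representative is $\varphi_\varepsilon(x)\eta_M(\widetilde u_k(x))$, so by Theorem~\ref{finitemu} applied to $\mu_k$,
$$
\mu_k((E_k^M\cap K_\varepsilon)\times K_\varepsilon) \le \int_{\Omega\times\Omega} \varphi_\varepsilon(x)\eta_M(\widetilde u_k(x))\varphi_\varepsilon(y)\,d\mu_k \le \|\mu_k\|_\square\,\|\varphi_\varepsilon\eta_M(u_k)\|_{1,p}\,\|\varphi_\varepsilon\|_{1,p}.
$$
A direct computation of $\nabla(\varphi_\varepsilon\eta_M(u_k))$, combined with Chebyshev's bound $|\{|u_k|>M/2\}|\le (2/M)^p\|u_k\|_p^p$ and $|\eta_M'|\le 2/M$, gives $\|\varphi_\varepsilon\eta_M(u_k)\|_{1,p} = O(1/M)$ uniformly in $k$ (using only the uniform $W^{1,p}_0$-bound on $u_k$). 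Since $\|\mu_k\|_\square \le \|\mu\|_\square + \|\mu_k-\mu\|_\square$ is bounded, this quantity tends to $0$ as $M\to\infty$ uniformly in $k$; adding the contribution $\varepsilon$ from the complement of $K_\varepsilon\times K_\varepsilon$ yields $\limsup_{M\to\infty}\sup_k\mu_k(E_k^M\times\Omega)\le\varepsilon$. The same estimate applies to $F_k^M$ and to $\mu$ itself. Letting first $k\to\infty$ (using Corollary~\ref{cortre} at level $M$), then $M\to\infty$, and finally $\varepsilon\to 0$ gives \eqref{ploi3}.
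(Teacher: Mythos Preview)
Your proof is correct and follows the same architecture as the paper's: truncate $u_k,v_k$, apply Corollary~\ref{cortre} at each truncation level, and control the tail $\mu_k(\{|\widetilde u_k|>M\}\times\Omega)$ uniformly in $k$ via a compact exhaustion $K_\varepsilon$ and the cut norm. The only real difference is in the tail estimate: the paper dominates the level set by $[\widetilde u_k-\lambda+1]^+\psi_\varepsilon$, splits $\mu_k=(\mu_k-\mu)+\mu$, handles the first piece via $\|\mu_k-\mu\|_\square\to0$ and the second via the weak convergence $[u_k-\lambda+1]^+\rightharpoonup[u-\lambda+1]^+$ against the fixed functional $T_{\psi_\varepsilon}$, and then lets $\lambda\to\infty$; you instead dominate by the \emph{bounded} function $\eta_M(u_k)$ and obtain the quantitative decay $\|\varphi_\varepsilon\eta_M(u_k)\|_{1,p}=O(1/M)$ directly from Chebyshev and $|\eta_M'|\le 2/M$, applying Theorem~\ref{finitemu} to $\mu_k$ itself without splitting. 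Your route is slightly more self-contained (no need to pass to the weak limit $[u-\lambda+1]^+$) and sidesteps the fact that $[\widetilde u_k-\lambda+1]^+\psi_\varepsilon$ is not a~priori in $L^\infty(\Omega)$, which in the paper's argument requires a small additional justification when invoking \eqref{intcapuv} for the signed measure $\mu_k-\mu$.
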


\begin{proof} For all $\lambda>0$ we define the truncation operator 
\begin{equation}\label{trunc}
\tau^\lambda (s):=(s\vee (-\lambda))\wedge\lambda.
\end{equation}
With fixed $\lambda>1$ we set $u^\lambda_k(x):= \tau^{\lambda}(u_k(x))$ and $v^\lambda_k(y):= \tau^{\lambda}(v_k(y))$, and correspondingly, $u^\lambda(x):= \tau^{\lambda}(u(x))$ and $v^\lambda(y):= \tau^{\lambda}(v(y))$.
By the uniqueness  of the quasicontinuous representatives, we deduce from \eqref{muzero} that 
$\widetilde u^\lambda_k(x)= \tau^{\lambda}(\widetilde u_k(x))$ and $\widetilde v^\lambda_k(x)= \tau^{\lambda}(\widetilde v_k(x))$ for $\mu_k$-almost every $(x,y)\in\Omega{\times}\Omega$. Similarly, we have  $\widetilde u^\lambda(x)= \tau^{\lambda}(\widetilde u(x))$ and $\widetilde v^\lambda(x)= \tau^{\lambda}(\widetilde v(x))$ for $\mu$-almost every $(x,y)\in\Omega{\times}\Omega$. 

Since $u^\lambda_k\rightharpoonup u^\lambda$ and  $v^\lambda_k\rightharpoonup v^\lambda$ weakly in $W^{1,p}_0(\Omega)$, we have
\begin{equation}\label{tr1}
\lim_{k\to+\infty} \int_{\Omega\times\Omega} f(\tau^\lambda(\widetilde u_k(x)),\tau^\lambda(\widetilde v_k(y)))d\mu_k(x,y)=
\int_{\Omega\times\Omega}f(\tau^\lambda(\widetilde u(x)),\tau^\lambda(\widetilde v(y)))d\mu(x,y)
\end{equation}
by Corollary \ref{cortre}. To conclude the proof of the result is suffices to estimate
\begin{eqnarray}\label{tr2}
&\displaystyle A^\lambda_k:=\biggl|\int_{\Omega\times\Omega} f(\tau^\lambda(\widetilde u_k(x)),\tau^\lambda(\widetilde v_k(y)))d\mu_k(x,y)-
\int_{\Omega\times\Omega}f(\widetilde u_k(x),\widetilde v_k(y))d\mu_k(x,y)\biggr|,\qquad\ 
\\
\label{tr3}
&\displaystyle
A^\lambda:=\biggl|\int_{\Omega\times\Omega} f(\tau^\lambda(\widetilde u(x)),\tau^\lambda(\widetilde v(y)))d\mu(x,y)-
\int_{\Omega\times\Omega}f(\widetilde u(x),\widetilde v(y))d\mu(x,y)\biggr|.\quad\qquad\qquad\ 
\end{eqnarray}
Let $M_0=\sup |f|$. Since
$$
A^\lambda_k\le 2M_0\,\mu_k(\{(x,y)\in\Omega\times\Omega: (\widetilde u_k(x),\widetilde v_k(y))\not\in [-\lambda,\lambda]^2\}),
$$
it is enough to separately estimate 
\begin{equation}\label{st1}
\mu_k(\{(x,y)\in\Omega\times\Omega: \widetilde u_k(x)> \lambda\}), \qquad \mu_k(\{(x,y)\in\Omega\times\Omega: \widetilde u_k(x)<- \lambda\}),
\end{equation}
\begin{equation}\label{st2}
\mu_k(\{(x,y)\in\Omega\times\Omega: \widetilde v_k(y)> \lambda\}), \qquad \mu_k(\{(x,y)\in\Omega\times\Omega: \widetilde v_k(y)<- \lambda\})\,.
\end{equation}

For fixed $\e>0$ let $K_\e$ be the compact sets introduced at the beginning of the proof of Proposition \ref{contpol}. By \eqref{kappaepsilon} we have
\begin{equation}\label{st3}
\mu_k(\{(x,y)\in\Omega\times\Omega: \widetilde u_k(x)> \lambda\})\le \mu_k(\{(x,y)\in K_\e\times K_\e: \widetilde u_k(x)> \lambda\})+ \e\,.
\end{equation}
Let $\psi_\e\in C^\infty_c(\Omega)$ with $0\le \psi_\e\le 1$ and $\psi_\e=1$ on $K_\e$. 
Let $T_\e$ be the element in $W^{-1,q}(\Omega)$ defined by
$$
\langle T_\e, v\rangle:=\int_{\Omega\times\Omega} \widetilde v(x)\psi_\e(y)d\mu(x,y)
\hbox{ for every }v\in W^{1,p}_0(\Omega).$$
Since $u_k$ are equibounded in $W^{1,p}_0(\Omega)$, by \eqref{intcapuv} there exists $C_\e>0$ and $k_\e\in\mathbb N$ such that
\begin{eqnarray*}
&\displaystyle\mu_k(\{(x,y)\in K_\e\times K_\e: \widetilde u_k(x)> \lambda\})
\le \int_{\Omega\times\Omega} [\widetilde u_k(x)-\lambda+1]^+\psi_\e(x)\psi_\e(y)d\mu_k(x,y)\\
&\displaystyle\le \Bigl|\int_{\Omega\times\Omega}\!\!\! [\widetilde u_k(x)-\lambda+1]^+\psi_\e(x)\psi_\e(y)d(\mu_k-\mu)(x,y)\Bigr|+
\int_{\Omega\times\Omega} \!\!\![\widetilde u_k(x)-\lambda+1]^+\psi_\e(y)d\mu(x,y)\\
&\displaystyle\le C_\e\|\mu_k-\mu\|_\square
+\langle T_{\psi_\e}, [u_k-\lambda+1]^+\rangle
=\e+ \langle T_{\psi_\e}, [u-\lambda+1]^+]\rangle
\end{eqnarray*}
for all $k\ge k_\e$. Now, since 
$
\lim\limits_{\lambda\to+\infty}\langle T_{\psi_\e}, [u-\lambda+1]^+]\rangle=0$,
we can choose $\lambda_\e>0$ such that 
$$
\mu_k(\{(x,y)\in K_\e\times K_\e: \widetilde u_k(x)> \lambda\})\le 2\e 
$$
for all $k\ge k_\e$ and $\lambda\ge\lambda_\e$. By \eqref{st3} this in turn gives 
$$
\mu_k(\{(x,y)\in\Omega\times\Omega: \widetilde u_k(x)> \lambda\})\le 3\e 
$$
for all $k\ge k_\e$ and $\lambda\ge\lambda_\e$.

In the same way, we can prove analogue estimates for the other measures in \eqref{st1} and \eqref{st2}, which we may assume to hold for the same $K_\e$ and $\lambda_\e$, and conclude that
$A^\lambda_k\le 24 M_0\e$
for all $k\ge k_\e$ and $\lambda\ge\lambda_\e$. Similarly we can prove that $A^\lambda\le 24M_0\e$ for $\lambda\ge\lambda_\e$. From these estimates, by \eqref{tr1}--\eqref{tr3} the claim follows by the arbitrariness of~$\e$.
\end{proof}

We finally prove a lower bound for limits of double integrals. 

\begin{corollary}\label{weaksemic}
Let $f\colon \mathbb R^2\to [0,+\infty)$ be a continuous function. Let
$\mu_k, \mu\in\mathcal M^{1,p}_+(\Omega\times\Omega)$ with  $\mu_k(\Omega{\times}\Omega)<+\infty$, $\mu(\Omega{\times}\Omega)<+\infty$, satisfying \eqref{cococo-2}. 
Let $u_k, v_k$ be sequences in $W^{1,p}_0(\Omega)$ converging to $u,v$  weakly in $W^{1,p}_0(\Omega)$. Then
\begin{equation}\label{semicf}
\liminf_{k\to+\infty} \int_{\Omega\times\Omega} f(\widetilde u_k(x),\widetilde v_k(y))d\mu_k(x,y)\ge \int_{\Omega\times\Omega} f(\widetilde u(x),\widetilde v(y))d\mu(x,y).
\end{equation}
\end{corollary}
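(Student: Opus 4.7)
The plan is to reduce the lower semicontinuity statement for an unbounded non-negative continuous $f$ to the already established continuity result for bounded continuous functions (Theorem \ref{cont-teo}) by truncating $f$ from above and taking a monotone limit.

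More concretely, for each $N\in\mathbb N$ I would consider the truncated integrand
$$
f_N(s,t):=f(s,t)\wedge N,
$$
which is continuous, bounded, and non-negative. Since the hypotheses on $\mu_k, \mu, u_k, v_k$ are exactly those of Theorem \ref{cont-teo}, I can apply that theorem to $f_N$ to obtain
$$
\lim_{k\to+\infty}\int_{\Omega\times\Omega} f_N(\widetilde u_k(x),\widetilde v_k(y))\,d\mu_k(x,y)=\int_{\Omega\times\Omega} f_N(\widetilde u(x),\widetilde v(y))\,d\mu(x,y).
$$
Because $f\ge f_N\ge 0$ pointwise and $\mu_k\ge 0$, one has
$$
\int_{\Omega\times\Omega} f(\widetilde u_k(x),\widetilde v_k(y))\,d\mu_k(x,y)\ge \int_{\Omega\times\Omega} f_N(\widetilde u_k(x),\widetilde v_k(y))\,d\mu_k(x,y),
$$
so taking $\liminf_{k\to\infty}$ on the left and the (genuine) limit on the right gives
$$
\liminf_{k\to+\infty}\int_{\Omega\times\Omega} f(\widetilde u_k(x),\widetilde v_k(y))\,d\mu_k(x,y)\ge \int_{\Omega\times\Omega} f_N(\widetilde u(x),\widetilde v(y))\,d\mu(x,y).
$$

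To conclude, I would let $N\to+\infty$ on the right-hand side. Since $0\le f_N\nearrow f$ pointwise and the measure $\mu$ is a non-negative Radon measure of finite total mass, the monotone convergence theorem yields
$$
\lim_{N\to+\infty}\int_{\Omega\times\Omega} f_N(\widetilde u(x),\widetilde v(y))\,d\mu(x,y)=\int_{\Omega\times\Omega} f(\widetilde u(x),\widetilde v(y))\,d\mu(x,y),
$$
giving \eqref{semicf}. No obstacle of any significance is expected: the only subtle point is ensuring that the quasicontinuous representative $\widetilde u$ really is a legitimate $\mu$-measurable integrand, but this is guaranteed by Theorem \ref{finitemu} (which shows that sets of zero $C_{1,p}$-capacity are $|\mu|$-null on both slices), so $f(\widetilde u,\widetilde v)$ is well defined up to $\mu$-null sets and the monotone convergence step goes through unchanged.
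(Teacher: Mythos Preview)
Your proposal is correct and essentially identical to the paper's own proof: the paper also truncates $f$ from above via $f^\lambda:=\tau^\lambda(f)=f\wedge\lambda$ (since $f\ge0$), applies Theorem~\ref{cont-teo} to the bounded $f^\lambda$, uses $f\ge f^\lambda$ to pass to the $\liminf$, and then lets $\lambda\to+\infty$ by monotone convergence. The only difference is that you spell out the monotone convergence step and the measurability caveat a bit more explicitly.
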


\begin{proof} For every $\lambda>0$ let $f^\lambda:=\tau^\lambda (f)$, where $\tau^\lambda$ is the truncation operator as in \eqref{trunc}. By Theorem \ref{cont-teo} we then have 
\begin{eqnarray*}
\liminf_{k\to+\infty} \int_{\Omega\times\Omega} f(\widetilde u_k(x),\widetilde v_k(y))d\mu_k(x,y)&\ge& 
\lim_{k\to+\infty} \int_{\Omega\times\Omega} f^\lambda(\widetilde u_k(x),\widetilde v_k(y))d\mu_k(x,y)
\\
&=& \int_{\Omega\times\Omega} f^\lambda(\widetilde u(x),\widetilde v(y))d\mu(x,y).
\end{eqnarray*}
We then conclude by letting $\lambda\to+\infty$.
\end{proof}

\section{$\Gamma$-convergence}\label{gammaconv}
In this final section we shall prove the $\Gamma$-convergence and the Mosco convergence of sequences of functionals as in \eqref{effe-k}.

Let $f\colon \mathbb R^2\to [0,+\infty)$ be a continuous function, and let
$\mu_k, \mu\in\mathcal M^{1,p}_+(\Omega\times\Omega)$. 
We define $F_k,F\colon  W^{1,p}_0(\Omega)\to[0,+\infty]$ by 
\begin{eqnarray}\label{effeacca}
F_k(u):=\int_{\Omega\times\Omega} f(\widetilde u(x),\widetilde u(y))d\mu_k(x,y)\ \hbox{ and }\  F(u):=\int_{\Omega\times\Omega} f(\widetilde u(x),\widetilde u(y))d\mu(x,y).
\end{eqnarray}

We assume that $f$ satisfies the following condition: there exists an unbounded set $\Lambda\subset[0,+\infty)$ and two constants $a,b\ge 0$ such that
\begin{equation}\label{trundec}
f(\tau^\lambda(s), \tau^\lambda(t)) \le a\, f(s,t) + b\quad\hbox{for all $s,t\in\mathbb R$ and for all $\lambda\in\Lambda$,}
\end{equation}
where $\tau^\lambda$ is the truncation operator defined in \eqref{trunc}. Note that this condition is valid if $f$ is bounded (with $a=0$) or when $f$ is decreasing by truncations (with $a=1$ and $b=0$).

Let $g_k, g\colon \Omega\times\mathbb R^d\to \mathbb R$ be Carath\'eodory functions satisfying the growth conditions
\begin{equation}\label{hypg}
c_0|\xi|^p\le g_k(x,\xi)\le c_1|\xi|^p+a(x),\quad c_0|\xi|^p\le g(x,\xi)\le c_1|\xi|^p+a(x)\ \hbox{  for all $(x,\xi)\in \Omega\times\mathbb R^d$,} \end{equation} 
for some constants $c_0, c_1>0$ and some function $a\in L^1(\Omega)$. Let $G_k, G\colon W^{1,p}_0(\Omega)\to [0,+\infty)$ be defined by 
\begin{equation}\label{giacca}
G_k(u):= \int_\Omega g_k(x,\nabla u)dx\quad\hbox{ and }\quad G(u):=\int_\Omega g(x,\nabla u)dx.
\end{equation}

\begin{theorem}\label{teogamma} Let
$\mu_k, \mu\in\mathcal M^{1,p}_+(\Omega\times\Omega)$ with  $\mu_k(\Omega{\times}\Omega)<+\infty$ and  $\mu(\Omega{\times}\Omega)<+\infty$. 
Let $F_k, F$ be defined as in \eqref{effeacca} with $f\colon \mathbb R^2\to [0,+\infty)$ a continuous function satisfying \eqref{trundec}. 
Let $G_k, G$ be defined as in \eqref{giacca} with $g_k, g\colon \Omega\times\mathbb R^d\to \mathbb R$  Carath\'eodory functions satisfying \eqref{hypg}. Suppose that
\begin{eqnarray}\label{mug}
&\|\mu_k-\mu\|_\square\to 0\quad\hbox{  and } 
\quad\mu_k(\Omega{\times}\Omega)\to \mu(\Omega{\times}\Omega),\\
\label{gammag}
 &\displaystyle G=\Gamma\hbox{-}\lim_{k\to+\infty} G_k \hbox{ with respect to the weak convergence in $W^{1,p}_0(\Omega)$.}
 \end{eqnarray} 
Then 
$ F+G=\Gamma\hbox{-}\lim\limits_{k\to+\infty} (F_k+G_k)$  with respect to the weak convergence in $W^{1,p}_0(\Omega)$.
\end{theorem}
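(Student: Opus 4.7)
My plan is to verify the $\Gamma$-liminf and $\Gamma$-limsup inequalities separately. For the liminf, fix $u_k \rightharpoonup u$ weakly in $W^{1,p}_0(\Omega)$. Since $F_k, G_k \ge 0$ it suffices to show $\liminf_k F_k(u_k) \ge F(u)$ and $\liminf_k G_k(u_k) \ge G(u)$: the first is Corollary \ref{weaksemic} applied with $v_k = u_k$, and the second is the $\Gamma$-liminf half of \eqref{gammag}.

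For the limsup I first treat $u \in W^{1,p}_0(\Omega) \cap L^\infty(\Omega)$ with $F(u) + G(u) < +\infty$. Fix $M > \|u\|_\infty$, use \eqref{gammag} to pick $v_k \rightharpoonup u$ with $G_k(v_k) \to G(u)$, and set $\hat v_k := \tau^M(v_k)$. The map $\tau^M$ is strongly continuous on $W^{1,p}_0(\Omega)$ and, via Rellich compactness, weakly continuous on bounded sets; since $\tau^M(u) = u$ this gives $\hat v_k \rightharpoonup u$. Splitting $G_k(\hat v_k)$ over $\{|v_k| \le M\}$ and $\{|v_k| > M\}$ and exploiting the bound $g_k(\cdot,0) \le a \in L^1(\Omega)$ from \eqref{hypg}, together with $|\{|v_k| > M\}| \to 0$ (from $v_k \to u$ in $L^p$ and $|u| < M$), I obtain
\[
G_k(\hat v_k) \le G_k(v_k) + \int_{\{|v_k| > M\}} a(x)\,dx,
\]
so $\limsup_k G_k(\hat v_k) \le G(u)$; the $\Gamma$-liminf yields the reverse inequality and hence $G_k(\hat v_k) \to G(u)$. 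Because $\|\hat v_k\|_\infty \le M$ uniformly, Corollary \ref{cortre} applied with $u_k = v_k = \hat v_k$ then gives $F_k(\hat v_k) \to F(u)$.

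For general $u$ with $F(u) + G(u) < +\infty$, I approximate by $u^\lambda := \tau^\lambda(u)$ for $\lambda \in \Lambda$. Dominated convergence on gradients gives $u^\lambda \to u$ strongly in $W^{1,p}_0(\Omega)$, whence $G(u^\lambda) \to G(u)$ by the Carath\'eodory growth of $g$. The identity $\widetilde{u^\lambda} = \tau^\lambda(\widetilde u)$ holds up to a set of zero capacity (uniqueness of the quasicontinuous representative), hence $\mu$-almost everywhere by Theorem \ref{finitemu}, and \eqref{trundec} supplies the $\mu$-integrable majorant $a\,f(\widetilde u(x), \widetilde u(y)) + b$, yielding $F(u^\lambda) \to F(u)$ by dominated convergence along $\lambda \to +\infty$ in $\Lambda$. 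A standard diagonal argument relying on the metrizability of the weak $W^{1,p}_0$-topology on bounded subsets then assembles the recovery sequences for $u^\lambda$ into a single sequence $w_k \rightharpoonup u$ with $(F_k + G_k)(w_k) \to F(u) + G(u)$.

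The main obstacle is the $L^\infty$-truncation in the bounded-target case: the $G_k$-recovery sequence $v_k$ need not be equibounded in $L^\infty$, yet without such a bound Corollary \ref{cortre} is inapplicable to the non-local term. The full growth \eqref{hypg} is crucial here, because the upper bound on $g_k(\cdot,0)$ is what absorbs the loss from setting the gradient to zero on $\{|v_k| > M\}$; condition \eqref{trundec} plays the analogous role at the outer approximation step, ensuring that truncating $u$ to $u^\lambda$ does not destroy the value of $F$.
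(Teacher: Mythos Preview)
Your proof is correct and follows the same overall architecture as the paper's: the liminf inequality via Corollary~\ref{weaksemic}, the limsup first for $u\in W^{1,p}_0(\Omega)\cap L^\infty(\Omega)$ using an $L^\infty$-bounded recovery sequence together with Corollary~\ref{cortre}, and then the passage to general $u$ through the truncations $\tau^\lambda(u)$ with $\lambda\in\Lambda$, where \eqref{trundec} supplies the dominated-convergence majorant for $F$. The differences are purely in implementation: where the paper invokes \cite[Proposition~2.5]{Butt-DM} to obtain a recovery sequence for $G_k$ already bounded in $L^\infty$, you build it yourself by truncating a given recovery sequence and controlling the energy on $\{|v_k|>M\}$ via the upper bound $g_k(\cdot,0)\le a$ from \eqref{hypg} and the equi-integrability of $a$; and where the paper concludes by lower semicontinuity of the $\Gamma$-limsup along $\tau^\lambda(u)\to u$, you run an equivalent diagonal argument. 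Your treatment is thus more self-contained, at the cost of a few extra lines.
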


\begin{proof} By \cite[Proposition 8.10]{DM} we have to prove that for all $u\in W^{1,p}_0(\Omega)$ the following properties hold:

(i)  for all $u_k$ converging to $u$ weakly in $W^{1,p}_0(\Omega)$ we have 
$$
F(u)+G(u)\le \liminf_{k\to+\infty}  (F_k(u_k)+G_k(u_k));$$

(ii) there exist $u_k$ converging to $u$ weakly in $W^{1,p}_0(\Omega)$ such that 
$$
F(u)+G(u)= \lim_{k\to+\infty}  (F_k(u_k)+G_k(u_k)).
$$

Claim (i) follows from Corollary \ref{weaksemic} with $v_k=u_k$ and from the liminf inequality for~$G_k$, which follows from \eqref{gammag}.

If $u\in W^{1,p}_0(\Omega)\cap L^\infty(\Omega)$, we deduce from  \eqref{gammag} that there exists a sequence $u_k$ converging to $u$ weakly in $W^{1,p}_0(\Omega)$ such that $G(u_k)\to G(u)$ and $\|u_k\|_\infty\le M$ for some constant $M$ and for all $k$ (see for instance \cite[Proposition 2.5]{Butt-DM}). Setting
$
\lambda=\max \{f(s,t): |s|\le M, |t|\le M\}$,
we may apply Theorem \ref{cont-teo} with $v_k=u_k$ and $\tau^\lambda (f)$ in the place of $f$, obtaining (ii).

Let now $u\in W^{1,p}_0(\Omega)$. If $F(u)=+\infty$, then claim (ii) follows from claim (i). Suppose then that $F(u)<+\infty$. By the validity of claim (ii) for $\tau^\lambda(u)$ we obtain
\begin{equation}\label{gammalimsupf}
F(\tau^\lambda(u))+G(\tau^\lambda(u))=\Bigl(\Gamma\hbox{-}\limsup_{k\to+\infty} (F_k+G_k)\Bigr)(\tau^\lambda(u)).
\end{equation}
By \eqref{hypg} we have $G(\tau^\lambda(u))\to G(u)$ as $\lambda\to+\infty$. Since $F(u)<+\infty$, by \eqref{trundec} and the Dominated Convergence Theorem we have $F(\tau^\lambda(u))\to F(u)$ as $\lambda\to+\infty$ with $\lambda\in\Lambda$. 
By \eqref{gammalimsupf}, using the lower semicontinuity of the $\Gamma$-limsup we get 
$$
F(u)+G(u)\ge\Bigl(\Gamma\hbox{-}\limsup_{k\to+\infty} (F_k+G_k)\Bigr)(u).
$$
By \cite[Proposition 8.10]{DM} we then obtain an inequality in claim (ii). The proof is completed by using claim (i).
\end{proof}

\begin{remark}[Mosco convergence]\rm
If the functionals $G_k$ converge to $G$ in the sense of the Mosco convergence in $W^{1,p}_0(\Omega)$; that is, for all $u\in W^{1,p}_0(\Omega)$ we have

(i)  for all $u_k$ converging to $u$ weakly in $W^{1,p}_0(\Omega)$ we have 
$$
G(u)\le \liminf_{k\to+\infty}  G_k(u_k);$$

(ii) there exist $u_k$ converging to $u$ strongly in $W^{1,p}_0(\Omega)$ such that 
$$
G(u)= \lim_{k\to+\infty}  G_k(u_k),
$$
then also $F_k+G_k$ converges in the sense of the Mosco convergence in $W^{1,p}_0(\Omega)$.
%
\end{remark}

%
%

\noindent {\bf Acknowledgements.}
 This paper is based on work supported by the National Research Project (PRIN  2017) 
 ``Variational Methods for Stationary and Evolution Problems with Singularities and 
 Interfaces", funded by the Italian Ministry of University, and Research. 
The authors are members of the Gruppo Nazionale per 
l'Analisi Matematica, la Probabilit\`a e le loro Applicazioni (GNAMPA) of the 
Istituto Nazionale di Alta Matematica (INdAM).

\end{document}